\newcommand\hrefdefaultfont{\ttfamily}
\xpatchcmd\href{\setkeys{href}{#1}}{\setkeys{href}{font=\hrefdefaultfont,#1}}{}{\fail}
\renewcommand*{\backref}[1]{}
\renewcommand*{\backrefalt}[4]{
  \ifcase #1 
  [No citations.]
  \or [#2]
  \else [#2]
  \fi }
\newtheorem{theorem}{Theorem}[section]
\newtheorem{proposition}[theorem]{Proposition}
\newtheorem{lemma}[theorem]{Lemma}
\newtheorem{corollary}[theorem]{Corollary}
\newtheorem*{namedtheorem}{\theoremname}
\newcommand{\theoremname}{testing}
\theoremstyle{definition}
\newtheorem{definition}[theorem]{Definition}
\newcommand{\refthm}[1]{Theorem~\ref{Thm:#1}}
\newcommand{\reflem}[1]{Lemma~\ref{Lem:#1}}
\newcommand{\refprop}[1]{Proposition~\ref{Prop:#1}}
\newcommand{\refcor}[1]{Corollary~\ref{Cor:#1}}
\newcommand{\refsec}[1]{Section~\ref{Sec:#1}}
\newcommand{\reffig}[1]{Figure~\ref{Fig:#1}}
\newcommand{\RR}{\mathbb{R}}  % Reals
\newcommand{\bdy}{\partial}   % Boundary
\title[Lorenz links obtained by twisting]{Hyperbolic and satellite Lorenz links obtained by twisting}
\author{Thiago de Paiva}
\address[]{School of Mathematics, Monash University, Clayton, VIC 3800, Australia }
\email[]{thiago.depaivasouza@monash.edu}
\author{Jessica S. Purcell}
\address[]{School of Mathematics, Monash University, Clayton, VIC 3800, Australia }
\email[]{jessica.purcell@monash.edu}
\subjclass{57K10, 57K32}
\begin{document}

\begin{abstract}
A Lorenz link is equivalent to a T-link, which is a positive braid built by concatenating torus braids of increasing size. When each torus braid except the largest is obtained by full twists, then the T-link can be described as the Dehn filling of a parent link. In this paper, we completely classify when such parent links are hyperbolic. This gives a classification of the geometry of T-links obtained by full twists when the amount of twisting is large, although the bound on the number of required twists is not effective. We also present effective results on hyperbolicity for broad families of T-links obtained by twisting, even when the number of twists is small. Finally, we identify families of satellite T-links obtained by half-twists. 
\end{abstract}

\maketitle

%%%%%%%%%%%%%%%%%%%%%%%%%%%%%%%%%%%%%%%%%%%%%%%%%%%%%%%%%%%%%%%%%
\section{Introduction}

Lorenz links are the closed periodic orbits of a system of equations investigated by Lorenz in the 1960s~\cite{Lorenz}. They exhibit interesting dynamics that has led to significant further investigation over the years, in the fields of dynamics, geometry, and topology; see for example~\cite{GhysLeys}. These links can be described as links on an embedded branched surface in $\RR^3$, called the Lorenz template, due to work of Guckenheimer and Williams~\cite{GuckenheimerWilliams}, and Tucker~\cite{Tucker}. Birman and Williams were the first to investigate Lorenz links through the lens of knot theory, in the 1980s~\cite{BirmanWilliams}, and the first to show such links are closed positive braids. Birman and Kofman~\cite{BirmanKofman} showed that Lorenz links are equivalent to T-links, which are positive braids with a particular form; see \refsec{Braids} below. Thus techniques from braid theory can be brought to bear upon Lorenz links via T-links.

We are interested in the complement of these links, and in particular their geometrization. Thurston showed in the 1980s that all knots in the 3-sphere are either torus knots, satellite, or hyperbolic~\cite{thurston:bulletin}, and we refer to this as the knot's geometric type. The geometric type of Lorenz links has been considered since work of Birman and Williams in the 1980s~\cite{BirmanWilliams}. They showed that all torus knots are Lorenz knots, and satellites obtained as certain cables of Lorenz knots are Lorenz knots. The hyperbolic geometry of Lorenz knots has been considered by Gomes, Franco, and Silva~\cite{GomesFrancoSilva:Partial, GomesFrancoSilva:Farey}, who proved hyperbolicity of certain Lorenz links satisfying conditions based on the Lorenz template. Satellite links have received additional attention, by El Rifai~\cite{ElRifai:Satellite}, de~Paiva~\cite{de2022satellite}, and de~Paiva and Purcell~\cite{depaiva-purcell2021satellites}.

In spite of this work, there remains no systematic way of determining whether a Lorenz link is hyperbolic, toroidal, or satellite using its description either on the Lorenz template, or as a closed braid in the form of a T-link. These descriptions uniquely determine a link, and hence uniquely determine its geometric type, so it is natural to ask for a simple description of geometric type based on the Lorenz link description. 

In this paper, we consider the Birman--Kofman description of Lorenz links as T-links. We extend the classification of the geometry of T-links to those that admit full twists. Such links are obtained from a ``parent'' link by Dehn filling. Such parent links can give significant insight into the geometry of their Dehn fillings. The ones considered in this paper are generalizations of links considered by Lee in the simpler twisted torus knot case~\cite[Proposition~5.7]{Lee:Unknotted}. They also appear in work used to obtain upper volume bounds on related links in \cite{CFKNP}. In this paper, we completely classify when these parent links are hyperbolic; this is \refthm{AugHyperbolic} below.
This result leads to new infinite families of hyperbolic T-links, determined only by parameters in a braid describing the link.

\begin{theorem}\label{Thm:MainGeneric}
Fix relatively prime integers $1<q<p$, and integers $1<a_1<\dots<a_n<p$. There exists $B\gg 0$ with the following property. 
Consider the T-link obtained from the $(p,q)$-torus knot by full twisting at least $B$ times in regions with $a_1, a_2, \dots, a_n$ strands, respectively. This T-link is hyperbolic if and only if one of the following holds:
\begin{itemize}
\item all the $a_i < q$, or
\item $a_n=q$ and $n>1$, or
\item there is $a_i>q$ that is not a multiple of $q$.
\end{itemize}
\end{theorem}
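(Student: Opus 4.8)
The plan is to deduce this from \refthm{AugHyperbolic} together with the Dehn filling description of fully twisted T-links and Thurston's hyperbolic Dehn surgery theorem. First I would recall the setup: the T-link obtained from the $(p,q)$-torus knot by full twisting $k_i \gg 0$ times in the region with $a_i$ strands is obtained by $1/k_i$ Dehn filling along the $i$-th augmentation component of the parent link $L$ associated to the data $(p,q;a_1,\dots,a_n)$. Since $L$ has only the augmentation circles filled (the remaining component carrying the $(p,q)$ cabling is left unfilled, or rather that component becomes the T-link), Thurston's hyperbolic Dehn surgery theorem guarantees that if $L$ is hyperbolic, then for all $k_i$ with $|k_i|$ sufficiently large the filled manifold is also hyperbolic; this furnishes the ineffective bound $B \gg 0$. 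Conversely, if $L$ is \emph{not} hyperbolic, I would argue (using the classification of geometric type and the fact that the augmentation fillings with large coefficients cannot create hyperbolicity from a Seifert-fibered or toroidal parent—more precisely, that the non-hyperbolic piece persists, or the link is a torus/cable link) that the resulting T-links fail to be hyperbolic as well; alternatively one invokes that the satellite or torus structure detected in \refthm{AugHyperbolic} descends to all sufficiently twisted fillings, as is done for twisted torus knots by Lee.

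The crux is then to translate the hyperbolicity criterion of \refthm{AugHyperbolic} into the concrete arithmetic condition on $q$ and the $a_i$. I expect \refthm{AugHyperbolic} to state that the parent link is hyperbolic unless the parameters fall into an explicit degenerate list—for instance, when some $a_i$ is a multiple of $q$ (so that a full twist on $a_i$ strands interacts with the $(p,q)$-cabling structure to produce an incompressible torus, yielding a satellite) or when all $a_i \le q$ forces a torus-link structure, or some analogous small-parameter coincidence. So the second main step is a careful case analysis: (i) when all $a_i < q$, show we land in the hyperbolic case of \refthm{AugHyperbolic}; (ii) when some $a_i > q$ is not a multiple of $q$, again show we are in the hyperbolic case, checking the other exceptional configurations of \refthm{AugHyperbolic} are avoided; and (iii) when some $a_i > q$ \emph{is} a multiple of $q$ (and we are not in case (i)), identify the essential torus or cabling annulus explicitly on the Lorenz template or in the braid, showing the parent is a satellite (or otherwise non-hyperbolic), hence so are the twisted T-links.

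The main obstacle I anticipate is case (iii) and the sharpness of the ``if and only if''—specifically, verifying that when some $a_i > q$ is a multiple of $q$ the parent link genuinely fails to be hyperbolic (rather than merely failing to obviously be so), and ensuring the non-hyperbolicity is inherited by \emph{all} sufficiently large twists rather than just the parent. For the satellite direction this should follow because a companion torus in the parent complement, being disjoint from the augmentation circles, survives every Dehn filling along them; for the torus-link direction when all $a_i$ divide appropriately one must recognize the fully twisted braid as a genuine torus link (or connected sum / cable thereof) via a braid-word simplification. A secondary technical point is confirming that the hypothesis ``$a_i$ increasing and less than $p$'' together with $\gcd(p,q)=1$ rules out the remaining exceptional items in \refthm{AugHyperbolic}, so that the dichotomy in the statement is exhaustive; I would handle this by direct inspection of the (presumably short) exceptional list in that theorem.
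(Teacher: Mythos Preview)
Your proposal is essentially the paper's own argument: Theorem~\ref{Thm:MainGeneric} is deduced immediately from Theorem~\ref{Thm:AugHyperbolic} via Thurston's hyperbolic Dehn filling theorem for the hyperbolic direction, and via the persistence of the essential torus under filling (the paper cites \cite[Theorem~5.6]{depaiva-purcell2021satellites} and Lee~\cite{Lee:Cable}) for the non-hyperbolic direction. Your case breakdown (i)--(iii) matches the dichotomy in Theorem~\ref{Thm:AugHyperbolic} exactly.

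Two small corrections to your expectations. First, the content of Theorem~\ref{Thm:AugHyperbolic} is already precisely the arithmetic dichotomy in the statement of Theorem~\ref{Thm:MainGeneric}; there is no further ``exceptional list'' to inspect, so your secondary technical worry evaporates. Second, in case~(iii) there is no torus-link subcase to handle: the parent link has at least two components (the torus knot $T(p,q)$ together with unknotted circles $J_{a_i}$), so it is never itself a torus link, and the non-hyperbolic parents are always satellite (or, when $n=1$ and $a_1=q$, annular). Thus the ``braid-word simplification'' you anticipate is not needed; the essential torus in the parent is disjoint from the $J_{a_i}$ and simply survives all fillings, exactly as you say.
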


The T-links of \refthm{MainGeneric} must be obtained by full twisting. Unfortunately the constant $B$ in the above theorem is not effective, thus we currently do not have a concrete, universal bound on the number of full twists that are required.
However, in \refsec{SomeParameters} we improve this: We present two theorems that guarantee hyperbolicity of T-links with full twists, given only their parameters, where the bounds on numbers of full twists required are explicit, small, and relatively simple. The results are \refthm{2Twists_an_p_RelPrime} and \refthm{SomeParamsHyperbolic}. For example, the following result is a corollary of \refthm{2Twists_an_p_RelPrime}.

\begin{corollary}\label{Cor:SpecialGeneric}
Suppose $p$, $q$, and $a_n$ are relatively prime integers, with $1<q<a_n<p$, and suppose $a_1< \dots< a_n$ are positive integers less than $p$. Let $K$ be the T-link obtained from the $(p,q)$-torus knot by full twisting at least once in regions with $a_1, a_2, \dots, a_{n-1}$ strands and at least twice in the region with $a_n$ strands. Then $K$ is hyperbolic. 
\end{corollary}

This result is quite strong, especially compared with similar results on hyperbolicity when multiple strands are twisted. For example Purcell~\cite[Theorem~3.7]{Purcell:HyperbolicMultiply} guarantees hyperbolicity of certain links obtained by Dehn filling, twisting multiple strands, but six full twists are required in addition to other symmetries for the methods of that paper to apply. Note \refcor{SpecialGeneric} requires only one or two full twists. Similarly, all the T-links that are hyperbolic by Theorems~\ref{Thm:2Twists_an_p_RelPrime} and~\ref{Thm:SomeParamsHyperbolic} require diagrams with only one or two full twists. This is because the proofs of these results have access to tools from braid theory, particularly work of Ito~\cite{Ito}, and do not strictly rely on geometry. 

Note that the description of Lorenz links via T-links does not require full twists, and so it would be nice to weaken the hypotheses of \refthm{MainGeneric} and \refcor{SpecialGeneric} even further to allow more general torus braids in regions with $a_j$ strands. This seems difficult. In this paper, we give more results in the satellite case. 

\begin{theorem}\label{Thm:MainSatellite}
For $q<p$ integers, let $K$ be a T-link obtained from the $(p,q)$-torus link by half-twisting in circles encircling less than $q$ strands, or encircling multiples of $q$ strands. Then $S^3-K$ is satellite. 
\end{theorem}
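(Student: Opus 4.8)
The plan is to produce in $S^3\setminus K$ a torus that is incompressible and not boundary-parallel; exhibiting such an essential torus is exactly what it means for $S^3\setminus K$ to be satellite. Concretely I would realize $K$ as a link lying in the interior of a \emph{knotted} solid torus $V\subset S^3$ that $K$ wraps around nontrivially and of which $K$ is not the core. First fix the geometry of the unmodified torus link: place $L=T(p,q)$ on the standard Heegaard torus $\mathcal T\subset S^3$, and realize each circle around which we half-twist as an unknot $C_i$ bounding a disk $D_i$ that meets $L$ transversally in $a_i$ consecutive parallel strands, so that $K$ is obtained from $L$ by performing on each of these strand-bundles the prescribed half-twist, an operation supported in an arbitrarily small ball $B_i$ about $D_i$.

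The heart of the proof is the following structural claim, and it is here that the hypotheses $a_i<q$ and $q\mid a_i$ enter. I claim $L$ can be isotoped, carrying the balls $B_i$ along, so that $K$ lies in the interior of a solid torus $V$ with: (a) $V$ knotted in $S^3$; (b) $K$ meeting every meridian disk of $V$; and (c) $K$ neither isotopic in $V$ to the core of $V$ nor contained in a ball in $V$. When every $a_i<q$, the intended $V$ is a regular neighborhood of an appropriate nontrivial torus knot or link on $\mathcal T$ through which all the twisted strands run (knottedness of $V$ being available since $p,q\ge 2$), the half-twists taking place in balls interior to $V$. When instead every $a_i=c_iq$ is a multiple of $q$, I would regard the $c_iq$ twisted strands as $c_i$ parallel cables of $q$ strands, note that the half- (or full-) twist on $c_iq$ strands refines into a twist of the $c_i$ cables together with twisting inside each cable, and take $V$ to be a neighborhood of the $(p',q)$-torus link obtained from $L$ by collapsing each twisted block of $q$ strands to one cable, so that the twisting survives as a pattern interior to $V$; mixed configurations are handled by nesting, choosing $V$ according to the outermost relevant grouping. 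For full twists this claim is essentially the content of the constructions in \cite{depaiva-purcell2021satellites}, where a full twist on $a_i$ strands is $\mp 1$ Dehn surgery on $C_i$ and $C_i$ is isotoped into $V$; the new input is to obtain it for half twists.

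Granting the claim, $S^3\setminus K$ is satellite. Because $V$ is knotted, $S^3\setminus\operatorname{int}V$ is a nontrivial knot complement, so a compressing disk for $\partial V$ in $S^3\setminus K$ must lie in $V$, hence be a meridian disk, contradicting (b); thus $\partial V$ is incompressible. If $\partial V$ were boundary-parallel in $S^3\setminus K$, it would cobound a product region with some $\partial N(K_j)$; since that region cannot lie in $S^3\setminus\operatorname{int}V$ (all components of $\partial N(K)$ lie in $\operatorname{int}V$), it lies in $V$ and forces $K$ to be isotopic to the core of $V$, contradicting (c). Hence $\partial V$ is essential and $K$ is a satellite.

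The main obstacle is the structural claim for half twists. A half-twist is not Dehn surgery on a single unknot, so the argument of \cite{depaiva-purcell2021satellites} does not transfer verbatim; instead one uses that the half-twist on $a_i$ parallel strands is supported in an arbitrarily small ball about $D_i$, checks that this ball can be isotoped into $\operatorname{int}V$ with the rest of $K$ kept inside $V$ — the step where the combinatorics controlled by $a_i<q$ or $q\mid a_i$ is used — and then re-verifies (b) and (c). Condition (b) is unchanged, since a twist supported in a ball does not alter the winding number of $K$ in $V$; the delicate point is (c), i.e.\ ruling out that the half-twisted pattern is a core or lies in a ball, where one uses $a_i\ge 2$ together with positivity of the half-twist to see that the twisted band is genuinely linked inside $V$. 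Carrying this out, with the bookkeeping for several twisting circles, is the bulk of the work. (An alternative, more combinatorial route: write $K$ as a T-link, rewrite each inserted $\Delta_{a_i}$ so the braid is again in T-link form, and apply Birman--Kofman duality to read off a cabling of $K$ over a torus knot or link from a divisibility among the dual parameters created by the half-twists; the obstacle there is the same, namely controlling the effect of the $\Delta_{a_i}$.)
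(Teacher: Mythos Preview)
Your high-level strategy --- exhibit an essential torus by placing $K$ inside a knotted solid torus $V$ --- is exactly right, and your cabling intuition for the multiples-of-$q$ case is the correct idea. But the proposal remains a sketch, and two specific points separate it from a proof.

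First, your case ``every $a_i<q$'' is not actually a case of the precise theorem, and your proposed $V$ there does not work. The precise statement (\refthm{MainHalfTwist}) requires at least one braid $(r_iq,s_iq)$ with $r_i>1$; the companion is the T-link $T((r_1,s_1),\dots,(r_m,s_m+1))$, which is vacuous if $m=0$. And if all twisting is on fewer than $q$ strands and none on multiples of $q$, the resulting T-link is often \emph{not} satellite: this is exactly the hyperbolic regime of \refthm{MainGeneric}. So the ``appropriate nontrivial torus knot on $\mathcal T$'' you allude to does not exist in general, and that branch of your case analysis should be dropped rather than argued.

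Second, the paper's construction is not the ``local ball'' picture you describe but a global cylinder argument, and the difference matters. The paper first applies \refprop{BraidMove} to rewrite $K$ as a closed braid on $r_mq$ strands; this move is what ultimately lets the tubes close up into a torus, and it has no analogue in your outline. Then \reflem{CylinderRqSq} shows directly that through each $(r_iq,s_iq)$-torus braid one can thread $r_i$ disjoint embedded solid cylinders, each enclosing $q$ strands, and that these cylinders themselves form an $(r_i,s_i)$-braid. The braids $(a_j,b_j)$ with $a_j\le q$ sit entirely inside the left-most cylinder, and the leftover $(\sigma_{r_mq-1}\cdots\sigma_{r_mq-q+1})^{p-r_m}$ sits inside the right-most one, so the cylinders glue to a single embedded torus $\Sigma$. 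Knottedness of the companion (it is $T((r_1,s_1),\dots,(r_m,s_m+1))$, nontrivial since $r_m\ge 2$) gives incompressibility; the fact that $q>1$ strands sit inside gives non-boundary-parallel. This is concrete and avoids entirely the obstacle you flag about half-twists not being Dehn surgeries: the ``half-twist'' content is packaged into \reflem{BraidRqSq} and \reflem{CylinderRqSq}, which analyze the torus braid $(rq,sq)$ directly rather than treating each half-twist as a local modification in a ball.

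In short: keep your cabling idea for the $q\mid a_i$ blocks, discard the separate ``all $a_i<q$'' case, and replace the local-ball framing with the explicit cylinder construction through the $(r_iq,s_iq)$ braids after the strand-reduction of \refprop{BraidMove}.
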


The precise statement is \refthm{MainHalfTwist}. Note this extends work of~\cite{depaiva-purcell2021satellites} to families of T-links with both full twists and \emph{half twists}, which gives many more families in a very natural way. 

\subsection{Acknowledgements}
This work was partially supported by the Australian Research Council, grant DP210103136. 

%%%%%%%%%%%%%%%%%%%%%%%%%%%%%%%%%%%%%%%%%%%%%%%%%%%%%%%%%%%%%%%%%
\section{Results on braids}\label{Sec:Braids}

This section reviews results on braids that will be used throughout. As usual, let $\sigma_i$ be the standard generator of the braid group, giving a positive crossing between the $i$-th and $(i+1)$-th strands.

For $1<p, q$, define the $(p,q)$-torus braid as:
\[ (\sigma_1\dots \sigma_{p-1})^q \]
Note that within the braid group on $p$ strands, its closure is the torus link $T(p,q)$. When $p,q$ are coprime, this is a torus knot, but we will not always restrict to coprime $p$ and $q$ unless specifically stated.

We will also consider such braids within larger braid groups. When $r<p$, the $(r,s)$ braid within the braid group on $p$ strands is still defined to be $(\sigma_1 \dots \sigma_{r-1})^s$, but now note this has $p-r$ strands with no crossings lying to the right of the braid, viewing the braid arranged from top to bottom. 

Let $r_1, \dots, r_k$ and $s_i, \dots, s_k$ be integers such that
$2\leq r_1< \dots < r_k$, and $s_i>0$ for all $i$. The T-link $T((r_1,s_1), \dots, (r_k,s_k))$ is defined to be the closure of the braid
\[ (\sigma_1\sigma_2\dots\sigma_{r_1-1})^{s_1}(\sigma_1\sigma_2\dots\sigma_{r_2-1})^{s_2}\dots(\sigma_1\sigma_2\dots\sigma_{r_k-1})^{s_k}.\]
Thus $T((r_1,s_1),\dots,(r_k,s_k))$ is obtained by concatenating the braids $(r_i,s_i)$ within the braid group on $r_k$ strands, and then taking the closure.

Finally, a \emph{full twist} on $p$ strands is given by
  \[ \Delta^2 = (\sigma_1\dots \sigma_{p-1})^p. \]

\begin{proposition}\label{Prop:BraidMove}
Let $p$, $q$, and $r$ be positive integers with $0<q\leq r<p$. 
There is an ambient isotopy of $S^3$ taking the $(p,q)$ torus link to the closure of the braid on $r$ strands given by
\[ (\sigma_{r-1}\dots \sigma_{r-q+1})^{p-r}(\sigma_{1}\dots \sigma_{r-1})^{q}. \]

Moreover, an ambient isotopy realising the equivalence fixes the portion of the braid $(\sigma_{1}\dots \sigma_{p-1})^{q}$ corresponding to the $r$ left-most strands at the top the braid. Thus, we may replace a neighbourhood of these strands above the braid $(\sigma_{1}\dots \sigma_{p-1})^{q}$ with any tangle $\tau$ on $r$ strands, and we find that the resulting link is ambient isotopic to the closure of the link obtained by concatenating the braid on $r$ strands $(\sigma_{r-1}\dots \sigma_{r-q+1})^{p-r}$, with $\tau$, and then with $(\sigma_{1}\dots \sigma_{r-1})^{q}$. See \reffig{TorusBraidMove}.
\end{proposition}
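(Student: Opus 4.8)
The plan is to prove the two closed braids of the first assertion are ambient isotopic by performing braid isotopies and Markov destabilisations that peel off strands one at a time, reducing from $p$ strands down to $r$; and then to obtain the refinement with the tangle $\tau$ by checking that each of these moves is realised by an ambient isotopy of $S^3$ supported away from a small collar of the top ends of the $r$ leftmost strands of the braid box, so that $\tau$ can be inserted there and carried along unchanged.

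The algebraic core to isolate first is the following identity, which uses only the braid relations (far commutativity of $\sigma_n$ with $\sigma_1,\dots,\sigma_{n-2}$, the conjugation rule $\sigma_i\delta_n=\delta_n\sigma_{i-1}$ for $2\le i\le n-1$, and $\sigma_1\cdots\sigma_{n-2}=\delta_n\sigma_{n-1}^{-1}$): writing $\delta_n=\sigma_1\sigma_2\cdots\sigma_{n-1}$, one has in $B_{n+1}$ for $1\le q\le n$
\[
(\delta_n\sigma_n)^q \;=\; \delta_n^{\,q}\,\bigl(\sigma_n\sigma_{n-1}\cdots\sigma_{n-q+1}\bigr),
\]
proved by induction on $q$. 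Since $\delta_p=\delta_{p-1}\sigma_{p-1}$, this rewrites $(\sigma_1\cdots\sigma_{p-1})^q=\delta_{p-1}^{\,q}(\sigma_{p-1}\sigma_{p-2}\cdots\sigma_{p-q})$ without stabilising; in the closure $\sigma_{p-1}$ now occurs exactly once, so conjugating it to the end of the braid word and applying one Markov destabilisation gives the closed $(p-1)$-braid $(\sigma_{p-2}\cdots\sigma_{p-q})\,\delta_{p-1}^{\,q}$. Telescoping the identity down to $\delta_r$ expresses $\delta_p^{\,q}=\delta_r^{\,q}\cdot\prod_{j=r}^{p-1}(\sigma_j\sigma_{j-1}\cdots\sigma_{j-q+1})$ (product in increasing $j$), whose factors each contain a single generator of index $\ge j$; iterating the rewrite-then-destabilise step $p-r$ times removes strands $p,p-1,\dots,r+1$ and, after straightening, leaves the closed $r$-braid $(\sigma_{r-1}\cdots\sigma_{r-q+1})^{p-r}(\sigma_1\cdots\sigma_{r-1})^q$. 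I expect the genuinely technical part to be pushing this word-bookkeeping through to that exact normal form; a less delicate alternative is to start from \reflem{InvertTpq}, present $T(p,q)$ as the closed $q$-braid $(\sigma_1\cdots\sigma_{q-1})^p$, stabilise it up to $r$ strands, and verify that the two $r$-braids are conjugate in $B_r$.

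For the ``moreover'' statement, observe that the internal braid isotopies above can be taken rel the top and bottom rows of the braid box, hence fix a collar of the top endpoints; the conjugations are slides of the bottom of the braid around the closure arcs on the right-hand side; and each destabilisation manipulates only the rightmost strand and its closure arc. Consequently the whole composite isotopy fixes, pointwise, a neighbourhood of the top ends of the $r$ leftmost strands of $(\sigma_1\cdots\sigma_{p-1})^q$, which lies just above the box and to the left of everything the moves touch. Inserting an arbitrary tangle $\tau$ on those $r$ strands in that neighbourhood therefore commutes with the entire reduction, and tracking where the neighbourhood lands in the final picture — between the block $(\sigma_{r-1}\cdots\sigma_{r-q+1})^{p-r}$ assembled from the discarded strands and the block $(\sigma_1\cdots\sigma_{r-1})^q$ — yields the asserted link and \reffig{TorusBraidMove}. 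The delicate point is verifying that the collar of $\tau$ can indeed be kept disjoint from the support of every move, equivalently that all the word manipulations act only on letters lying ``to the right of'' $\tau$.
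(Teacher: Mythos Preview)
Your algebraic route is correct in spirit and will go through, but it is genuinely different from the paper's argument. The paper does not manipulate braid words at all: for each strand in position $r+1,\dots,p$ it observes that this strand passes \emph{only under} the $q$ overstrands of the torus braid, so together with its closure arc it bounds an embedded disc below the projection plane; sliding the strand across that disc is a single ambient isotopy that (i) removes one strand, (ii) deposits one block $\sigma_{r-1}\cdots\sigma_{r-q+1}$ at the top of what remains, and (iii) is visibly supported away from the top of the $r$ leftmost strands. Repeating $p-r$ times gives the result and the ``moreover'' simultaneously, with no word-bookkeeping.

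Your approach trades that picture for the identity $\delta_{n+1}^{\,q}=\delta_n^{\,q}(\sigma_n\cdots\sigma_{n-q+1})$ plus Markov destabilisation. This is fine for one step, but the iteration is subtler than you indicate: after the first destabilisation the surviving prefix $(\sigma_{p-2}\cdots\sigma_{p-q})$ already contains $\sigma_{p-2}$, so the next pass has \emph{two} occurrences of the top generator and one must invoke braid relations (not just far commutativity) to merge them before destabilising again. This can be done --- it is essentially the half-twist recursion --- but it is the real work and deserves to be written out, not deferred as ``straightening''. A second point: the conjugations you use to position the lone $\sigma_{p-1}$ for destabilisation are cyclic slides around the closure; once $\tau$ is inserted, those slides traverse \emph{all} closure arcs, including the ones attached to $\tau$. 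The disjointness you need is not automatic from ``acts only on letters to the right of $\tau$'' --- you should instead commute the tail past $\tau$ inside the braid group (which is legitimate since the tail uses only $\sigma_j$ with $j\ge r$, hence commutes with any word in $\sigma_1,\dots,\sigma_{r-1}$), avoiding conjugation altogether. With those two fixes your argument is complete; the paper's geometric disc simply packages both issues into one visible move.
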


\begin{figure}
  \includegraphics{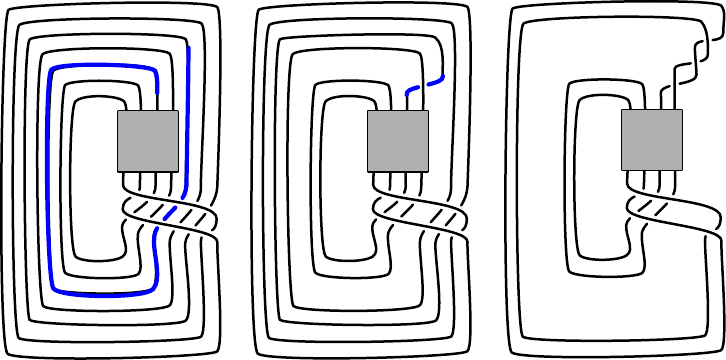}
  \caption{Illustration of \refprop{BraidMove} in the case that $q=2$, $r=4$, $p=7$, for an arbitrary tangle shown as a gray box. The left-most picture shows the original link. The $(r+1)$-st strand, shown in blue, can be pulled tight beneath the diagram, resulting in the middle picture. The right-most picture shows the result after isotoping strands $(r+1)$ to $p$.}
  \label{Fig:TorusBraidMove}
\end{figure}

\begin{proof}
Because $r\geq q$, the $(r+1)$-st strand at the top of the braid only runs under the $q$ overcrossing strands in the braid corresponding to the $(p,q)$ torus link.  It then runs around the braid closure back to the top, returning to the $r-q+1$ position. Together with a horizontal line from the $r-q+1$ position to the $r+1$ position, this strand bounds a disc in $S^3$, lying under the plane of projection. Use this disc to push the strand in $S^3$ to become a horizontal strand lying below the plane of projection, running from the $r+1$ position, then behind $q$ strands, to the $r+1-q$ position. Adjust slightly, pulling the right side up, so that the result is a closed braid; see \reffig{TorusBraidMove}, middle. Note that the resulting braid consists of only $p-1$ strands. This isotopy generalises the isotopy given by Lee in~\cite[Figure~6]{Lee:Positively}, and by de~Paiva in~\cite[Figure~1]{de2022torus}. 

This move can be repeated for all the $p-r$ strands to the right of the $(r+1)$-st strand. When finished, we obtain a link on $r$ strands as claimed. 
\end{proof}

\subsection{Braid index}

Recall that the \emph{braid index} of a knot $K$, which we will denote $\beta(K)$, is the minimal number of strands required to form a braid with closure isotopic to $K$. We will repeatedly use the following result of Franks and Williams~\cite{FranksWilliams} on braid index of the closure of a positive braid. 

\begin{theorem}[Corollary~2.4 of \cite{FranksWilliams}]\label{Thm:FranksWilliams}
  Let $B$ be a positive braid on $p$ strands that contains a full twist $\Delta^2$. 
  Then $B$ has braid index $p$. \qed
\end{theorem}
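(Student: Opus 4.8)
The plan is to deduce this from the Morton--Franks--Williams (MFW) inequality, which is also proved in \cite{FranksWilliams}: if a link $L$ is the closure of a braid on $m$ strands with exponent sum $e$, and $P_L(v,z)$ is its HOMFLY polynomial (normalised by $P_{\mathrm{unknot}}=1$ and $v^{-1}P_{L_+}-vP_{L_-}=zP_{L_0}$), then
\[ e-m+1 \;\le\; \min\deg_v P_L \;\le\; \max\deg_v P_L \;\le\; e+m-1, \]
and hence the braid index satisfies $\beta(L)\ge\tfrac12\bigl(\max\deg_v P_L-\min\deg_v P_L\bigr)+1$. Writing $\widehat B$ for the closure of $B$: since $\widehat B$ is by construction the closure of a $p$-strand braid we have $\beta(\widehat B)\le p$ with no work, so the theorem is really the reverse inequality $\beta(\widehat B)\ge p$, and by the MFW inequality it is enough to prove that the $v$-span of $P_{\widehat B}$ equals $2(p-1)$.

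Using that $\Delta^2$ is central, write $B=\Delta^2\gamma$ with $\gamma$ a (possibly trivial) positive braid word on $p$ strands, and set $e=p(p-1)+c$, where $c$ is the length of $\gamma$. Applying the degree bounds to this particular $p$-strand diagram already gives $\min\deg_v P_{\widehat B}\ge e-p+1$ and $\max\deg_v P_{\widehat B}\le e+p-1$, so the $v$-span is automatically $\le 2(p-1)$, with equality exactly when the two ``corner'' coefficients, of $v^{\,e-p+1}$ and of $v^{\,e+p-1}$ in $P_{\widehat B}$, are both nonzero. So the whole content of the theorem reduces to showing these two coefficients do not vanish.

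This nonvanishing is the heart of the matter and is where the full-twist hypothesis enters: for a generic braid the MFW inequality is strict, and it is precisely the presence of $\Delta^2$ that makes the given $p$-strand diagram a minimal braid representative and forces the HOMFLY $v$-span to saturate the bound at \emph{both} ends. I would prove it by a skein-theoretic induction in the spirit of \cite{FranksWilliams}: analyse $P_{\widehat B}$ through repeated applications of $v^{-1}P_{L_+}-vP_{L_-}=zP_{L_0}$, keeping track of the coefficients of the two extreme powers of $v$, and use positivity of $B$ together with the full twist to see that the extremal contributions are genuinely present and cannot be cancelled by interior terms --- for instance by isolating, at each stage, a distinguished resolution carrying a monic extreme term. (A cleaner but heavier alternative is to work in the Iwahori--Hecke algebra $H_p$, using that $\Delta^2$ maps to a central element that is scalar on each Young-diagram block, and computing the extreme $v$-behaviour of the Ocneanu trace block by block.) Once the two corner coefficients are known to be nonzero, the $v$-span is exactly $2(p-1)$, and MFW gives $\beta(\widehat B)\ge p$, completing the proof. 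The main obstacle, as indicated, is this control of the extreme coefficients under the recursion; everything else --- the reduction through MFW and the trivial upper bound --- is formal.
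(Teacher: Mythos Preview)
The paper does not prove this statement at all: it is quoted verbatim as Corollary~2.4 of Franks--Williams \cite{FranksWilliams} and marked with \qed, so there is no in-paper argument to compare your proposal against. Your outline is, however, essentially the Franks--Williams proof itself: reduce to sharpness of the MFW $v$-degree bounds, so that the $v$-span of $P_{\widehat B}$ equals $2(p-1)$, and then check that both extreme coefficients are nonzero.

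On the substance of your sketch: the lower-degree coefficient (at $v^{\,e-p+1}$) is nonzero for \emph{any} positive braid, not just those containing $\Delta^2$ --- resolving every positive crossing to its $L_0$ state yields the $p$-component unlink, whose HOMFLY contributes a monic term of exactly this degree, and positivity prevents cancellation. The genuine content, as you correctly flag, is the top coefficient at $v^{\,e+p-1}$, and this is where the full twist is essential. Your plan to handle it by skein recursion is the right idea, but as written it is only a plan: you have not isolated the specific inductive scheme or the invariant that guarantees a surviving monic term at the top. Franks--Williams carry this out by an explicit induction that, at each step, peels off a strand using the full twist to produce a smaller positive braid still containing a full twist. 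If you want to claim a self-contained proof rather than a reference, you would need to supply that induction; otherwise, citing \cite{FranksWilliams} as the paper does is entirely appropriate here.
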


\begin{lemma}\label{Lem:pqdrBraidIndex}
Let $p$, $q$, $d$ and $r$ be positive integers such that $q\leq r <p$ and $d+q\geq r$. Let $B_r$ be a positive braid on $r$ strands, and let $B_p$ denote the braid on $p$ strands obtained by adding $p-r$ trivial strands to the right of the braid $B_r$. 
Then the closure of the braid on $p$ strands
\[ B_p (\sigma_1\dots \sigma_{r-1})^d(\sigma_1\dots \sigma_{p-1})^q \]
has braid index equal to $r$. 
\end{lemma}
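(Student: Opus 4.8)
The plan is to prove this by obtaining upper and lower bounds on the braid index that both equal $r$. The lower bound will come from \refthm{FranksWilliams} applied after an isotopy, and the upper bound from \refprop{BraidMove} applied in reverse.

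First I would establish the upper bound $\beta \leq r$. The braid $(\sigma_1\dots\sigma_{p-1})^q$ is (the $p$-strand form of) the $(p,q)$-torus braid, with the braid $B_p(\sigma_1\dots\sigma_{r-1})^d$ sitting above it. Since $B_p$ together with $(\sigma_1\dots\sigma_{r-1})^d$ only involves the $r$ left-most strands, the whole thing $B_p(\sigma_1\dots\sigma_{r-1})^d$ is a tangle $\tau$ on $r$ strands placed above the $(p,q)$-torus braid in the sense of the "moreover" clause of \refprop{BraidMove} (using $q \le r < p$). Applying that proposition, the closure is ambient isotopic to the closure of the braid on $r$ strands obtained by concatenating $(\sigma_{r-1}\dots\sigma_{r-q+1})^{p-r}$, then $\tau = B_r(\sigma_1\dots\sigma_{r-1})^d$, then $(\sigma_1\dots\sigma_{r-1})^q$. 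This is an $r$-strand braid, so $\beta \leq r$.

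Next, the lower bound $\beta \geq r$. Here I would use the hypothesis $d+q\geq r$, which has not yet been used. The $r$-strand braid produced above is $(\sigma_{r-1}\dots\sigma_{r-q+1})^{p-r}\, B_r\, (\sigma_1\dots\sigma_{r-1})^{d+q}$. This is a positive braid on $r$ strands, and I claim it contains a full twist $\Delta_r^2 = (\sigma_1\dots\sigma_{r-1})^r$. Indeed, since $d+q \geq r$, the terminal block $(\sigma_1\dots\sigma_{r-1})^{d+q}$ already contains $(\sigma_1\dots\sigma_{r-1})^r = \Delta_r^2$ as a sub-word (the remaining factors being positive), and the rest of the braid — $(\sigma_{r-1}\dots\sigma_{r-q+1})^{p-r}$, the positive braid $B_r$, and the leftover $(\sigma_1\dots\sigma_{r-1})^{(d+q)-r}$ — is all positive. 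So by \refthm{FranksWilliams} this $r$-strand positive braid has braid index exactly $r$, and since braid index is a link invariant, the original closure has braid index $r$ as well. (One should note that $p-r > 0$ so the first block is a genuine positive word; if it were empty the argument is only easier.)

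The main obstacle I anticipate is bookkeeping in the application of \refprop{BraidMove}: I must check carefully that the tangle $\tau = B_r(\sigma_1\dots\sigma_{r-1})^d$ is indeed in the correct position — namely a neighbourhood of the $r$ left-most strands \emph{above} the $(p,q)$-torus braid — and that after the move the concatenation order is as stated, so that the two blocks $(\sigma_1\dots\sigma_{r-1})^d$ and $(\sigma_1\dots\sigma_{r-1})^q$ become adjacent and combine to $(\sigma_1\dots\sigma_{r-1})^{d+q}$. Everything else (positivity of the braid, presence of the full twist given $d+q\ge r$, invariance of braid index) is routine once that identification is set up correctly.
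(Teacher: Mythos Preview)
Your proposal is correct and follows essentially the same approach as the paper: apply \refprop{BraidMove} (with the tangle $\tau = B_r(\sigma_1\dots\sigma_{r-1})^d$) to rewrite the closure as a positive braid on $r$ strands containing a full twist, then invoke \refthm{FranksWilliams}. The only cosmetic difference is that the paper applies Franks--Williams directly to conclude braid index equals $r$, rather than separating into upper and lower bounds as you do.
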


\begin{proof}
By \refprop{BraidMove}, the closure of the given braid on $p$ strands is equivalent to the closure of the braid on $r$ strands
\[ B'=(\sigma_{r-1}\dots \sigma_{r-q+1})^{p-r} B_r (\sigma_1\dots \sigma_{r-1})^d(\sigma_{1}\dots \sigma_{r-1})^q.
\]
Because this is a positive braid, and because $d+q\geq r$, the braid $B'$ has at least one positive full twist on $r$ strands. Thus \refthm{FranksWilliams} implies that the closure of $B$ (and $B'$) has braid index equal to $r$.
\end{proof}

\begin{corollary}\label{Cor:TLinkBraidIndex}
Suppose $0<r_1<\dots < r_n < p$ are integers, $s_1, \dots, s_n$ and $q$ are positive integers, and suppose $q\leq r_n \leq s_n+q$. Then the T-link
\[  K = T((r_1, s_1), \dots, (r_n, s_n), (p, q)) \]
has braid index equal to $r_n$.
\end{corollary}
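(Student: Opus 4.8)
The plan is to deduce this directly from \reflem{pqdrBraidIndex}, of which the corollary is essentially a repackaging. First I would unwind the definition of the T-link: by definition $K$ is the closure of the braid on $p$ strands
\[ (\sigma_1\sigma_2\dots\sigma_{r_1-1})^{s_1}(\sigma_1\sigma_2\dots\sigma_{r_2-1})^{s_2}\cdots(\sigma_1\sigma_2\dots\sigma_{r_n-1})^{s_n}(\sigma_1\sigma_2\dots\sigma_{p-1})^q. \]
I would then match this with the hypothesis of \reflem{pqdrBraidIndex} by setting $r=r_n$ and $d=s_n$, and letting $B_{r_n}$ be the braid on $r_n$ strands obtained by concatenating the torus braids $(r_1,s_1),\dots,(r_{n-1},s_{n-1})$. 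Since $r_1<\dots<r_{n-1}<r_n$, these factors only involve the generators $\sigma_1,\dots,\sigma_{r_{n-1}-1}$, so $B_{r_n}$ is genuinely a \emph{positive} braid on $r_n$ strands; adjoining $p-r_n$ trivial strands on the right produces the braid $B_p$ of the lemma, and the displayed T-link braid is exactly $B_p(\sigma_1\dots\sigma_{r_n-1})^{s_n}(\sigma_1\dots\sigma_{p-1})^q$.

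Next I would check the two numerical hypotheses of \reflem{pqdrBraidIndex} for the choices $r=r_n$, $d=s_n$: we need $q\leq r_n<p$, which is immediate from the assumption $q\leq r_n$ together with $r_n<p$ (the last of the chain $0<r_1<\dots<r_n<p$); and we need $d+q\geq r$, i.e.\ $s_n+q\geq r_n$, which is precisely the remaining assumption $r_n\leq s_n+q$. With both verified, \reflem{pqdrBraidIndex} immediately yields that the closure of the braid, namely $K$, has braid index equal to $r_n$.

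I do not expect a genuine obstacle here — the statement is a direct specialisation of \reflem{pqdrBraidIndex}, and the only work is bookkeeping. The points to be careful about are: confirming that each torus braid on fewer than $r_n$ strands really does sit inside the braid group on $r_n$ strands with trivial strands to its right (so that $B_{r_n}$ is well defined and positive), making sure the generator indices line up correctly when the final torus factor $(\sigma_1\dots\sigma_{p-1})^q$ is left untouched on $p$ strands while the earlier factors are grouped into $B_p$, and noting the degenerate case $n=1$, in which $B_{r_1}$ is the empty braid but is still a (trivial) positive braid, so the argument applies verbatim.
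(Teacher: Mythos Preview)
Your proposal is correct and matches the paper's proof essentially verbatim: the paper also defines $B_{r_n}$ as the concatenation of the braids $(r_1,s_1),\dots,(r_{n-1},s_{n-1})$ on $r_n$ strands, identifies the T-link braid with $B_{r_n}(\sigma_1\dots\sigma_{r_n-1})^{s_n}(\sigma_1\dots\sigma_{p-1})^q$, and invokes \reflem{pqdrBraidIndex} via the hypothesis $q\leq r_n\leq s_n+q$. Your additional remarks on the $n=1$ case and the bookkeeping of trivial strands are sound but not needed beyond what the paper already implicitly uses.
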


\begin{proof}
Let $B_{r_n}$ be the braid on $r_n$ strands obtained as the concatenation of torus braids $(r_1,s_1) \dots(r_{n-1},s_{n-1})$, where we view each $(r_i,s_i)$ as a braid on $r_n$ strands by adding $r_n-r_i$ trivial strands to the right of the braid $(r_i,s_i)=(\sigma_1\dots\sigma_{r_i-1})^{s_i}$. Then the given T-link is the closure of the braid
\[ B_{r_n} (\sigma_1\dots\sigma_{r_n-1})^{s_n}(\sigma_1\dots\sigma_{p-1})^q. \]
Since $q\leq r_n \leq s_n+q$, the result follows from \reflem{pqdrBraidIndex}. 
\end{proof} 

The next definition is from Williams~\cite{Williams:GenCables}. 

\begin{definition}
A \emph{generalized $q$-cabling} of a link $L$ is a link $L'$ contained in the interior of a tubular neighbourhood $L\times D^2$ of $L$ such that
\begin{enumerate}
\item each fiber $D^2$ intersects $L'$ transversely in $q$ points;  and
\item all strands of $L'$ are oriented in the same direction as $L$ itself.
\end{enumerate} 
\end{definition}

Williams showed the following result on generalised $q$-cablings for knotted $L$ in~\cite{Williams:GenCables}. 

\begin{theorem}[Theorem~1 of Williams \cite{Williams:GenCables}]\label{Thm:Williams}
The braid index is multiplicative under generalized cabling. That is, if $L$ is a link with each component a non-trivial knot and $L'$ is a generalized $q$-cabling of $L$ then $\beta(L') = q\beta(L),$ where $\beta(*)$ is the braid index of $*$. \qed
\end{theorem}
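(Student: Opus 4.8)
The plan is to establish the two inequalities $\beta(L')\le q\,\beta(L)$ and $\beta(L')\ge q\,\beta(L)$ separately; the first is a direct construction and the second carries the content. For the upper bound, present $L$ as the closure of a braid $\gamma$ on $n=\beta(L)$ strands and take the tubular neighbourhood $L\times D^2$ to be a neighbourhood of this closed braid. Condition~(1) in the definition of a generalized $q$-cabling says precisely that $L'$ is in braided position inside $L\times D^2$, so $L'=\hat\delta$ for some $\delta\in B_q$. Replacing each of the $n$ strands of $\gamma$ by $q$ parallel copies produces a braid on $qn$ strands, and inserting $\delta$ along $q$ adjacent parallel strands at one point gives a braid whose closure is $L'$. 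Hence $\beta(L')\le qn=q\,\beta(L)$.

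For the lower bound I would argue geometrically. The case $q=1$ is trivial, since then $L'$ is isotopic to $L$ inside $L\times D^2$, so assume $q\ge 2$. Fix a closed braid presentation of $L'$ on $m=\beta(L')$ strands, with braid axis $A$ and disk fibration $\{D_\theta\}_{\theta\in S^1}$ of $S^3\setminus A$, and let $W=L\times D^2$, a disjoint union of solid tori with $L'\subset\operatorname{int}W$. Since each component of $L$ is a nontrivial knot and $q\ge 2$, the torus $T=\partial W$ is essential in $S^3\setminus L'$, and one checks it remains essential in $S^3\setminus(L'\cup A)$. The plan is then to normalise $T$ with respect to the fibration, using a Birman--Menasco style braid-foliation argument that realises the isotopy by exchange moves not changing the strand number, so that $W$ meets each $D_\theta$ in exactly $b$ meridian disks. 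The core of $W$ is isotopic to $L$ and becomes a closed braid on $b$ strands, so $b\ge\beta(L)$; and since every meridian disk of $W$ meets $L'$ in exactly $q$ points, we get $m=bq$. Combining, $\beta(L')=m=bq\ge q\,\beta(L)$, which with the upper bound gives equality.

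The main obstacle is exactly this normalisation: putting $T$ into braided position with respect to the braid fibration while keeping control of the strand count requires ruling out the bad local configurations of $T\cap D_\theta$ (clasp and bigon type), and this is where essentiality of $T$ in $S^3\setminus(L'\cup A)$ is needed. An alternative that avoids braid foliations is to run the lower bound through the Morton--Franks--Williams inequality: one would compute the spread of the HOMFLY polynomial $P_{L'}$ in the framing variable by working in the HOMFLY skein algebra of the annulus, where the cabling pattern $\delta$ lives, and show that this spread is exactly $q$ times that of $P_L$ up to the expected framing correction; the difficulty there is identifying which element of the annular skein algebra governs the extremal degrees. In either approach the hypothesis that every component of $L$ is a nontrivial knot is indispensable: a generalized $q$-cabling of the unknot is merely the closure of some braid in $B_q$, whose closure can have braid index strictly less than $q$.
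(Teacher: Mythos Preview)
The paper does not give a proof of this statement: it is quoted verbatim as Theorem~1 of Williams~\cite{Williams:GenCables} and closed with a \texttt{\textbackslash qed}, so there is no argument in the paper to compare your proposal against. The authors use it purely as a black box in the proofs of Proposition~\ref{Prop:2Twists_an_p_RelPrime} and Proposition~\ref{Prop:SomeParamsAtoroidal}.

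That said, your outline is a faithful sketch of how the theorem is actually established. The upper bound is exactly as you say (with the minor caveat that for a link $L$ with several components you need one pattern braid $\delta_i\in B_q$ per component, not a single $\delta$). For the lower bound you have correctly located the entire difficulty: one must isotope the essential torus $T=\partial W$ in the complement of $L'\cup A$ until every fibre $D_\theta$ meets $W$ in $b$ meridian discs, \emph{without} increasing the number of strands of $L'$. Your remark that this is where essentiality of $T$ (hence non-triviality of each component of $L$ together with $q\ge 2$) is used is right on point, and your observation that the hypothesis fails for the unknot is exactly why the paper also needs Lemma~\ref{Lem:WilliamsTrivialCase} from~\cite{depaiva2021hyperbolic} to handle the unknotted case. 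The HOMFLY/Morton--Franks--Williams alternative you mention is a genuine second route, but making it sharp for an arbitrary pattern $\delta$ in the annular skein is not easier than the geometric argument.
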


This result was extended to unknotted $L$ in the case of positive braids by de~Paiva in~\cite{depaiva2021hyperbolic}. The following result is from that paper. 

\begin{lemma}[Lemma~2.3 of \cite{depaiva2021hyperbolic}]\label{Lem:WilliamsTrivialCase}
Let $L'$ be a generalized $q$-cabling of the unknot $L$, with $L$ given by a positive braid on $n$ strands, where $n>1$. Also, assume the knot inside $L$ is given by a positive braid. Then $L'$ has braid index equal to $q$. \qed
\end{lemma}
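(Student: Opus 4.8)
The plan is to write $L'$ explicitly as the closure of a positive braid on $q$ strands that contains a full twist, and then to quote \refthm{FranksWilliams}, which immediately gives $\beta(L')=q$ — the bound $\beta(L')\le q$ being automatic as soon as $L'$ is presented as a $q$-braid.

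First I would record the framing data coming from the presentation of $L$. Write $L$ as the closure of a positive braid $\delta$ on $n$ strands and let $D=\hat\delta$ be the resulting diagram of the unknot. Since every crossing of $\delta$ is positive, the writhe of $D$ equals the number of crossings $c(\delta)$; and since $L$ is the unknot while $n>1$ (so $\delta$ is not the trivial braid, whose closure would be an $n$-component unlink), we get $w:=c(\delta)\ge 1$. Next, because $L$ is unknotted, $N(L)$ is a standardly embedded solid torus in $S^3$ and the complementary solid torus $V'=S^3\setminus\operatorname{int} N(L)$ has unknotted core $A$. By definition a generalized $q$-cabling $L'\subset N(L)$ meets every meridian disk of $N(L)$ transversely in $q$ coherently oriented points, so $L'$ is a closed braid on $q$ strands with axis $A$; in particular $L'$ is a genuine closed $q$-braid in $S^3$.

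Then I would pin down the braid word. By hypothesis the pattern of $L'$ in $N(L)$ is a positive braid $\gamma$ on $q$ strands, relative to the trivialisation of $N(L)$ induced by the braid $\delta$, i.e.\ the blackboard framing of $D$. Isotoping the standard solid torus $N(L)$ to the round one changes this trivialisation by $w$ units of framing, which inserts $w$ positive full twists $(\sigma_1\cdots\sigma_{q-1})^{q}$ on the $q$ cable strands. Hence, with respect to the axis $A$, the link $L'$ is the closure of the positive braid
\[ \gamma\,(\sigma_1\sigma_2\cdots\sigma_{q-1})^{qw} \]
on $q$ strands. As $w\ge 1$, this positive braid contains the full twist $(\sigma_1\cdots\sigma_{q-1})^{q}$, so \refthm{FranksWilliams} gives that its closure has braid index exactly $q$, i.e.\ $\beta(L')=q$.

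The only step with genuine content — and the expected obstacle — is the framing computation: checking that reading the cable pattern $\gamma$ as an honest braid in $S^3$ really does append $w=c(\delta)\ge 1$ positive full twists, so that a full twist is present. This is precisely where the hypotheses that $L$ is a positive braid on $n>1$ strands and that the pattern is positive are used; dropping them makes the conclusion false (for instance, a cyclic $q$-strand pattern cabled onto the $1$-strand unknot is again the unknot, of braid index $1$). Everything else — recognising a generalized cabling of the unknot as a closed $q$-braid, and applying \refthm{FranksWilliams} — is routine. One could alternatively try to manipulate a given positive-braid presentation of $L'$ on more than $q$ strands directly down to a $q$-strand braid with a full twist using destabilisations in the spirit of \refprop{BraidMove}, but the framing argument above seems cleanest.
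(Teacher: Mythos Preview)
The paper does not give its own proof of this lemma; it is quoted from \cite{depaiva2021hyperbolic} and closed with a \qed. Your argument is correct and is almost certainly the one in that reference: realise $L'$ as a closed $q$-braid about the core $A$ of the complementary solid torus $S^3\setminus N(L)$, note that the isotopy carrying the positive-braid presentation of the unknot $L$ to the round unknot converts the blackboard framing into the $0$-framing and hence inserts $w=c(\delta)\ge 1$ positive full twists on the $q$ cable strands, and then invoke \refthm{FranksWilliams}. The check that $w\ge 1$ (because a positive braid on $n>1$ strands with connected closure must have at least one crossing) is exactly the place where $n>1$ is used, as you point out.

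One small remark on the only delicate step: the hypothesis ``the knot inside $L$ is given by a positive braid'' is most naturally read --- and is used in the applications later in this paper --- as saying that $L'$ and $L$ are simultaneously closed positive braids about a common axis (the essential torus having been shown to miss that axis via Ito's theorem). That reading immediately gives your statement that the pattern $\gamma$ is positive relative to the braid-induced (blackboard) trivialisation of $N(L)$, so your interpretation is the right one.
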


%%%%%%%%%%%%%%%%%%%%%%%%%%%%%%%%%%%%%%%%%%%%%%%%%%%%%%%%%%%%%%%%%
\section{Parents of T-links}

In this section, we build the ``parent links'' mentioned in the introduction. Dehn filling on such links produces T-links with full twists. By classifying when such links are hyperbolic, and applying Thurston's hyperbolic Dehn filling theorem, we show that, in an appropriate sense, most T-links with only full twists are hyperbolic. This is an extension of work by de~Paiva and Purcell~\cite{depaiva-purcell2021satellites}. There, the same links were constructed, and some conditions were given to guarantee hyperbolicity. Here, we strengthen the result by completely characterising when such links are hyperbolic.

\begin{definition}[The unknots $J_a$]\label{Def:Unknots}
Let $p, q$ be relatively prime integers such that $1<q<p$. Consider the $(p,q)$-torus braid on $p$ strands, and its closure, the torus link $T(p,q)$. Let $F$ denote the Heegaard torus on which $T(p, q)$ lies.
Let $a$ be an integer with $0< a < p$. Define the \emph{unknot $J_{a}$} to be an unknot lying horizontally with respect to the $(p,q)$-torus braid, positioned just above the crossings of the braid, bounding a disc such that the interior of that disc meets $F$ transversely in a single arc intersecting the $a$ leftmost strands of the braid. 

More generally, given $a_1, \dots, a_n$ satisfying $1< a_1 < \dots < a_n <p$, define \emph{unknots $J_{a_{1}}, \dots, J_{a_n}$} to be disjoint unknots as above, positioned so that the $i$-th is pushed vertically above the $(i+1)$-th with respect to the braid, so that all are disjoint. Figure~\ref{Fig:AugmentedLink} shows an example.
\end{definition}

These links $T(p,a)\cup J_{a_1}\cup \dots \cup J_{a_n}$ can be Dehn filled to obtain T-links. They are also studied in \cite{depaiva-purcell2021satellites}, where it is shown that $K$ is hyperbolic whenever all the $a_i>q$ are not multiples of $q$. Below, we improve that result to show that only one of the $a_i$ need not be a multiple of $q$ to guarantee hyperbolicity. 

\begin{figure}
  \includegraphics{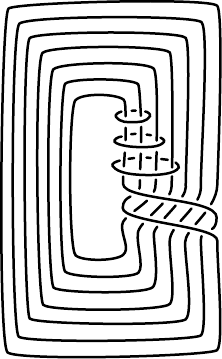}
  \caption{Shows $T(7,2)$ augmented at the top right by $J_2$, $J_3$, and $J_4$.}
  \label{Fig:AugmentedLink}
\end{figure}

\begin{proposition}\label{Prop:Atoroidal}
Let $p, q$ be relatively prime integers with $1 < q < p$. Let $a_1, \dots, a_n$ be integers such that $1< a_1 < \dots < a_n <p$, with $n>1$.
Finally, assume that either there is $a_i>q$ that is not a multiple of $q$, or that $a_n=q$. Then the link 
$K = T(p, q)\cup J_{a_n}\cup \dots\cup J_{a_1}$ is atoroidal.
\end{proposition}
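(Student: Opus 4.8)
The plan is to assume for contradiction that $M = S^3 - K$ contains an essential torus $T$ and to rule this out using the Seifert geometry of the torus knot complement together with the spanning discs of the unknots $J_{a_i}$. Write $C_0 = T(p,q)$ and $X = S^3 - N(C_0)$, and recall that $X$ is Seifert fibred over a disc with two cone points, of orders $p$ and $q$, whose exceptional fibres are the cores of the Heegaard solid tori, with linking numbers $p$ and $q$ with $C_0$; since the base orbifold has no essential simple closed curve, $X$ contains no essential torus in its interior. For each $i$ let $D_i$ be the disc bounded by $J_{a_i}$: by construction the interior of $D_i$ meets $C_0$ transversely in $a_i$ points and is disjoint from every other $J_{a_j}$, so the interior of $D_i$ meets $K$ in exactly those $a_i$ points of $C_0$, and $\operatorname{lk}(J_{a_i},C_0)=a_i\ge 2$.

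First I would show that $T$, which may be isotoped off $N(C_0)$, must be compressible in $X$. If it were incompressible in $X$ it would be boundary-parallel there, cutting off a product region $\partial N(C_0)\times I$ from $\partial N(C_0)$; as $T$ is not boundary-parallel in $M$, some $J_{a_i}$ would lie in this region, and hence would be isotopic in $X$ either to a curve on $\partial N(C_0)$ — but the only unknot there is the meridian, with linking number $1\ne a_i$ — or to a point, giving $\operatorname{lk}(J_{a_i},C_0)=0$; both are impossible. So $T$ compresses in $X$, via a disc necessarily meeting $\bigcup_i N(J_{a_i})$ (otherwise $T$ would compress in $M$). Following the compression and using that $2$-spheres in $S^3$ bound balls, $T$ bounds a solid torus; a short analysis of the position of $C_0$, using that $T(p,q)$ is not a satellite knot, reduces us to the case that $T$ bounds a solid torus $N\subset X$ with $C_0\notin N$ and $L_N:=K\cap N$ a nonempty union of some of the $J_{a_i}$, each winding at least once around $N$, with $L_N$ not a single core-parallel curve (else $T$ would be boundary-parallel in $M$).

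The heart of the argument is an innermost-disc analysis of $D_i\cap T$, taking $J_{a_i}\in L_N$ with $a_i$ minimal, so that the interior of $D_i$ is disjoint from all $J_{a_j}$ and from $N(C_0)$. An innermost circle of $D_i\cap T$ unpunctured by $C_0$ cuts a disc off $D_i$ which, with a subdisc of $T$, bounds a ball disjoint from $K$ (a $J_{a_j}$ inside it would be an unknot in a ball missing $C_0$, hence null-linking with $C_0$), and so can be removed by isotopy; if every circle of $D_i\cap T$ could be removed we would push $D_i$ off $T$, forcing $C_0\in N$ — a contradiction. Thus at some stage every innermost circle of $D_i\cap T$ is punctured by $C_0$, and then one argues that $T$ together with the spanning discs forces the portion of $C_0$ on the far side of $T$ to appear as a cabled pattern, so that $T$ is, up to isotopy, a companion torus of a cabling of $C_0$ along some of the $J_{a_j}$ compatible with the Seifert fibration of $X$; the cabling slope $pq$ on $\partial N(C_0)$ then forces every such $a_j$ to be a multiple of $q$. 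This is exactly where the hypothesis is used: the circle $J_{a_{i_0}}$ with $a_{i_0}>q$ and $q\nmid a_{i_0}$ cannot belong to such a pattern, and its $a_{i_0}$ strands straddle two of the $q$-strand blocks of any such cabling, so $D_{i_0}$ must meet $T$; a last innermost-disc argument on $D_{i_0}\cap T$ then produces a genuine compression of $T$ in $M$, contradicting essentiality.

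I expect the final case — confirming that the surviving torus is genuinely aligned with the $(p,q)$ Seifert fibration, hence incompatible with an index $a_{i_0}$ that is not a multiple of $q$ — to be the main obstacle. I would address it either by putting $T$ into normal form with respect to the Seifert fibred structure of $X$ (with the $J_{a_i}$ reinserted) and classifying it as horizontal or vertical while controlling its intersection with $\bigcup_i\partial N(J_{a_i})$, or, more concretely and effectively, by applying \refprop{BraidMove} with $r=a_n$ to present $K$ as a closed braid on $a_n$ strands together with its braid axis $J_{a_n}$ and the nested inner axes $J_{a_i}$ $(i<n)$ lying in a single fibre disc: then $S^3-(C_0\cup J_{a_n})$ fibres over the circle with fibre an $a_n$-punctured disc and explicit monodromy $\beta$, an essential torus in $M$ is forced (after isotopy off the $J_{a_i}$) to arise from a reducing system for $\beta$ disjoint from those curves, and one checks from the explicit braid word and the positions of the $J_{a_i}$, using $q\nmid a_{i_0}$, that no such system exists. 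The braid-theoretic route seems more likely to give a clean, effective argument, since the needed braid lemmas are already available.
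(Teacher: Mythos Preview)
Your proposal is a plan rather than a proof, and the step you yourself flag as the main obstacle---showing that the surviving torus is aligned with the Seifert fibration of the torus-knot exterior so as to force $q\mid a_j$---is never actually carried out. Neither your innermost-disc/normal-form route nor your monodromy/reducing-system route is filled in; the latter in particular would require a nontrivial classification of reducing curves for an explicit but complicated positive braid word, disjoint from several prescribed curves in a fibre disc.

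The paper's argument is considerably shorter and bypasses this obstacle entirely via \emph{braid index}. After observing (as you essentially do) that $T$ bounds a solid torus $V$ containing $T(p,q)$, the paper applies \refprop{BraidMove} with $r=a_n$ so that $J_{a_n}$ becomes the braid axis of a closed $a_n$-strand braid isotopic to $T(p,q)$. Then $T\subset S^3-N(J_{a_n})$ meets the fibre disc $D_{a_n}$ in meridians of $V$, each meeting $T(p,q)$ in a fixed number $b>1$ of points; hence $b\mid a_n$ and $T(p,q)$ is a generalised $b$-cabling of the core $L$ of $V$. Since torus-knot exteriors are atoroidal, $T$ compresses to the outside and $V$ is unknotted; now \reflem{WilliamsTrivialCase} gives $\beta(T(p,q))=b$, while Franks--Williams gives $\beta(T(p,q))=q$, forcing $b=q$. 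The contradiction is then one line: for $a_{i_0}>q$ with $q\nmid a_{i_0}$, the circle $J_{a_{i_0}}$ lies outside $V$, its disc $D_{a_{i_0}}$ meets $T(p,q)$ in $a_{i_0}$ points and hence meets $L$ in $a_{i_0}/q$ points, which is absurd. The key idea you are missing is this use of braid index (Williams' multiplicativity together with Franks--Williams) to pin down $b=q$; it replaces your entire Seifert-fibration and reducing-system analysis with two citations.
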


\begin{proof}
Suppose $S^3-N(K)$ admits an essential torus $T$. Then $T$ bounds a solid torus $V \subset S^3$ that must contain at least one component of $K$.

First we show that we may choose $V$ to contain $T(p,q)$.
For suppose $V$ is disjoint from $T(p,q)$. Then it must contain at least one $J_{a_j}$. The component $J_{a_j}$ must have positive wrapping number in $V$, for otherwise $T(p,q)$ and $J_{a_j}$ would have zero linking number, which is a contradiction. 
Because there is no essential torus in the exterior of the unknot in $S^3$, it follows in this case that $T$ is unknotted in $S^3$. Therefore, $T$ bounds a second solid torus $V'$ containing $T(p,q)$. Thus in all cases we may assume $T$ bounds a solid torus containing $T(p,q)$.

As $a_n \geq q$, by \refprop{BraidMove}, the torus knot $T(p, q)$ is isotopic to a closed braid with $a_n$ strands so that under the isotopy, the largest unknot $J_{a_n}$ becomes the braid axis. Because the isotopy moves only the right-most $p-a_n$ strands, all unknots $J_{a_1}, \dots, J_{a_n}$ are untouched by the isotopy.

The torus $T$ is then contained in the solid torus $S^3-N(J_{a_n})$, and bounds a solid torus $V$ containing $T(p,q)$. It follows that $J_{a_n}$ is disjoint from $V$. 

The torus $T$ must intersect the disc $D_{a_n}$ bounded by $J_{a_n}$ in a series of circles, with each circle bounding a meridian of $V$. Each meridian of $V$ can be isotoped to meet the same number of strands of $T(p,q)$, as follows. The boundary of a meridian of $V$ defines an unknot in $S^3$, and all such unknots are isotopic in $S^3-N(K)$, where the isotopy is obtained by pushing the boundary of the meridian of $V$ along the torus $T=\bdy V$. Because $T(p,q)$ forms a braid in the complement of $J_{a_n}$, it meets these discs monotonically.

Let $b$ denote the number of times that a meridian of $V$ intersects the strands of $T(p,q)$ on the disc $D_{a_n}$. Note $b>1$, or else $T$ would be boundary parallel to $T(p, q)$.
Note also that $b<a_n$, otherwise $T$ would be boundary parallel to $J_{a_n}$.

Observe that $V$ winds some number of times around the solid torus $S^3-N(J_{a_n})$, and note that each meridian of the latter solid torus meets exactly $a_n$ strands of $T(p,q)$, since this is the number of strands in the closed braid isotopic to $T(p,q)$ obtained from \refprop{BraidMove}.
%% Since $V$ meets each meridian of $S^3-N(J_{a_n})$ a total of $a_n$ times, and each meridian of $V$ meets $T(p,q)$ a total of $b$ times, $b$ must divide $a_n$.
It follows that $b$ must divide $a_n$.
Then $T(p,q)$ is a generalised $b$-cabling of $L$, where $L$ is the core of the solid torus $V$.

Observe that $T$ embeds in the exterior of the torus knot $S^3-N(T(p,q))$. By work of Tsau~\cite{Tsau},
there are no essential tori in a torus knot exterior. Because $b>1$, it follows that $T$ must be compressible to its outside. That is, $V$ is unknotted in $S^3$. Thus, \reflem{WilliamsTrivialCase} implies that $T(p,q)$ has braid index equal to $b$.

On the other hand, the torus knot $T(p,q)$ with $1<q<p$ has braid index equal to $q$; for example this follows from Franks and Williams' \refthm{FranksWilliams}. Then $b=q$. Since $b$ divides $a_n$, it follows that $q$ divides $a_n$. Recall also that we must have a strict inequality $b< a_n$. This rules out essential tori in the case $a_n=q$. Thus we many now assume that $a_n>q$.

By hypothesis, there is $a_i \in \lbrace a_1, \dots, a_n \rbrace$ which is greater than $q$ and not a multiple of $q$.
Since $a_i>q$, it must be the case that $J_{a_i}$ is disjoint from the solid torus $V$. Since $T(p,q)$ intersects the disc $D_{a_i}$ bounded by $J_{a_i}$ a total of $a_i$ times, and $T(p,q)$ is a generalised $q$-cabling of $L$, it must be the case that $L$ intersects the disc $a_i/q$ times. However, $q$ does not divide $a_i$. This is a contradiction. 
\end{proof}

\begin{lemma}\label{Lem:TwoComponents}
Let $p, q$ be relatively prime integers with $1 < q < p$. Let $a_n, \dots, a_1$ be integers such that $1< a_1 < \dots < a_n <p$  with $n>1$.
Then the link
$K = T(p, q)\cup J_{a_n}\cup \dots\cup J_{a_1}$
has no annuli with boundaries in two different components.
\end{lemma}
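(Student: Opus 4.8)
The plan is to run a purely homological argument; beyond linking numbers, no geometric input is needed, which is what makes this substantially easier than the argument ruling out essential tori in \refprop{Atoroidal}. Suppose toward a contradiction that $A$ is an essential annulus in $M = S^3 - N(K)$ with boundary circles $\gamma_1 \subset \partial N(K_1)$ and $\gamma_2 \subset \partial N(K_2)$ on distinct components $K_1 \neq K_2$ of $K$. Essentiality forces $\gamma_1,\gamma_2$ to be essential on their respective tori, and since $A$ is a properly embedded oriented surface we have $[\gamma_1] + [\gamma_2] = [\partial A] = 0$ in $H_1(M;\ZZ)$. I would then recall the standard facts that $H_1(M) \cong \ZZ^{n+1}$ is freely generated by the meridians $\mu_0$ of $T(p,q)$ and $\mu_1,\dots,\mu_n$ of $J_{a_1},\dots,J_{a_n}$, and that the Seifert-framed longitude of a component $C$ satisfies $[\lambda_C] = \sum_{C'\neq C}\mathrm{lk}(C,C')\,\mu_{C'}$ in $H_1(M)$. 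From \refdef{Unknots}, the disc bounded by $J_{a_i}$ meets the positive torus braid transversely in $a_i$ coherently oriented points and may be taken disjoint from the other $J$'s, so $\mathrm{lk}(T(p,q),J_{a_i}) = \pm a_i$ and $\mathrm{lk}(J_{a_i},J_{a_j}) = 0$ for $i\neq j$. Writing $[\gamma_s] = m_s\mu_{K_s} + \ell_s[\lambda_{K_s}]$ with $\gcd(m_s,\ell_s) = 1$, the rest of the argument is just expanding $[\gamma_1] + [\gamma_2] = 0$ against this basis.

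There are two cases. If $K_1 = J_{a_i}$ and $K_2 = J_{a_j}$ with $i\neq j$, then the $\mu_i$- and $\mu_j$-coefficients force $m_1 = m_2 = 0$, hence $\ell_1,\ell_2 \in \{\pm 1\}$, so both boundary curves are longitudes; the $\mu_0$-coefficient then reads $\pm a_i \pm a_j = 0$, impossible since $a_i$ and $a_j$ are distinct positive integers. If instead $K_1 = T(p,q)$ and $K_2 = J_{a_i}$, then --- and this is the only place the hypothesis $n>1$ is used --- I pick an index $k \in \{1,\dots,n\}\setminus\{i\}$; the $\mu_k$-coefficient is $\pm\ell_1 a_k = 0$, so $\ell_1 = 0$ and $m_1 = \pm 1$, i.e. $\gamma_1$ is a meridian of $T(p,q)$. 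The $\mu_i$-coefficient then forces $m_2 = 0$, so $\gamma_2$ is a longitude of $J_{a_i}$, and the $\mu_0$-coefficient gives $\pm 1 \pm a_i = 0$, contradicting $a_i > 1$. Either way we obtain a contradiction, so no such annulus exists.

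The coefficient bookkeeping is routine, so I expect the only points requiring care to be the linking-number computations (immediate from the positioning in \refdef{Unknots} and positivity of the torus braid) and, mildly, the remark that an annulus with essential boundary lying on two components is automatically incompressible --- a compressing disc would exhibit an essential torus-boundary curve as null-homologous in $M$, which the longitude computation above rules out. The conceptual point is that $[\partial A] = 0$ constrains the two boundary slopes so tightly that a numerical mismatch ($a_i = a_j$ in one case, $a_i = 1$ in the other) is forced; the hypothesis $n>1$ is precisely what produces the auxiliary meridian $\mu_k$ needed to kill $\ell_1$ in the mixed case. For $n=1$ the homological constraints alone do not forbid such an annulus, which is consistent with the fact that the one-extra-component case is exactly the twisted torus knot situation treated by Lee.
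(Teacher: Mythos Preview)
Your argument is correct and follows essentially the same route as the paper's: both proofs are linking-number obstructions exploiting that the two boundary circles of the annulus are homologous in the link exterior, split into the same two cases, and reach the same numerical contradictions ($a_i\neq a_j$ in the first case, $a_i>1$ in the second, with $n>1$ supplying the extra meridian needed to pin down the slope on $\partial N(T(p,q))$). Your packaging via $H_1(M)$ and $[\partial A]=0$ is a tidy systematisation of what the paper does more informally using that $\partial_1 A$ and $\partial_2 A$ are isotopic along $A$ and hence have equal linking numbers with every other component.
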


\begin{proof}
Suppose that $S^3 - N(K)$ has an annulus $A$ with boundaries $\partial_1 A$ and $\partial_2 A$ that lie in two different components, $C_1$ and $C_2$, respectively, of $\partial (S^3 - N(K))$.

\vspace{1mm}

\textbf{Case 1:} 
Consider first that $C_1$ and $C_2$ are $J_{a_j}$ and $J_{a_k}$, respectively, for some $j\neq k \in \{1, \dots, n\}$. 

Note $\bdy_1 A$ and $\bdy_2 A$ are isotopic in $S^3-N(K)$.
The linking number between $C_j$ and $\bdy_j A$ is zero if and only if $\bdy_j A$ is the longitude of $C_j$, in which case $C_j$ and $\bdy_j A$ are isotopic, for $j=1,2$.

Suppose $\bdy_1 A$ is the longitude of $C_1$, but $\bdy_2 A$ is not the longitude of $C_2$. Since $\bdy_1 A$ and $\bdy_2 A$ are isotopic, $C_1$ and $C_2$ would have nonzero linking number in this case, but this is not possible. Similarly $\bdy_2 A$ cannot be the longitude of $C_2$ if $\bdy_1 A$ is not the longitude of $C_1$.

Thus either $\bdy_1 A$ is the longitude of $C_1$ and $\bdy_2 A$ is the longitude of $C_2$, or neither is a longitude. If both are longitudes, then $C_1$ and $C_2$ are isotopic, which is not possible. Thus neither are longitudes. 

Then the linking number between $C_2$ and $\bdy_2 A$ is positive. However, $C_1$ and $C_2$ have zero linking number, so $\bdy_1 A$ and $C_2$ must have zero linking number. But $\bdy_2 A$ is isotopic to $\bdy_1 A$, and so $\bdy_1 A$ and $C_2$ have nonzero linking number equal to the linking number of $C_2$ and $\bdy_2 A$. This is a contradiction.

\vspace{1mm}

\textbf{Case 2:}
Now suppose that $C_1$ and $C_2$ are $J_{a_j}$ and $T(p, q)$, respectively, for some $j\in \{1, \dots, n\}$. Again $\bdy_1 A$ and $\bdy_2 A$ are isotopic. 

Suppose first that $\bdy_2 A$ wraps at least one time along the longitude of $C_2=T(p,q)$. Then $\bdy_2 A$ has positive linking number with each of the components $J_{a_k}$, because $T(p,q)$ has positive linking number with each. But the linking number between $\bdy_2 A$ and $J_{a_k}$ for $J_{a_k} \neq C_1$ is zero, because $C_1$ has linking number zero with each such component, and $\bdy_2 A$ has the same linking number with $C_1$ as $\bdy_1 A$. This is a contradiction.

Thus $\bdy_2 A$ is a meridian of $C_2=T(p,q)$. So $\bdy_2 A$ and $T(p,q)$ have linking number equal to one. The curve $\bdy_1 A$ is some torus knot $T(a, b)$ on $\bdy N(C_1)$. If $a$ is equal to  zero, then $\bdy_1 A$ is a meridian of $C_1$. Because a meridian of $C_1$ has linking number zero with $C_2=T(p,q)$, it follows that $\bdy_1 A$ and $T(p,q)$ have linking number equal to zero. However, this is not possible as $\bdy_1 A$ and $\bdy_2 A$ are isotopic. So, $a\neq 0$. The linking number between $\bdy_1 A$ and $C_2=T(p,q)$ is equal to $a\cdot a_j$, where $C_1=J_{a_j}$. Because $\bdy_2 A$ and $T(p,q)$ have linking number $1$, and $\bdy_1 A$ and $T(p,q)$ have linking number identical to $\bdy_2 A$ and $T(p,q)$, it follows that $a\cdot a_j = 1$. This is impossible since $a_j>1$. Therefore, no such annulus exists. 
\end{proof}

\begin{lemma}\label{Lem:AnnulusT(p,q)}
Let $K$ be as in \refprop{Atoroidal}. Then $S^3-N(K)$ has no essential annuli with both boundary components in $\partial N(T(p,q))$.
\end{lemma}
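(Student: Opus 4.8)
The plan is to suppose $S^3-N(K)$ contains an essential annulus $A$ with both boundary components on $\partial N(T(p,q))$, and derive a contradiction, leveraging the geometric setup already established. Since $A$ is essential (incompressible and not boundary-parallel), its two boundary curves are essential parallel curves on the torus $\partial N(T(p,q))$, each a slope $T(c,d)$ for fixed coprime $(c,d)$ (up to sign). First I would analyse the linking numbers: each boundary curve $\bdy_i A$ has linking number $c\cdot p + d$ (or similar, depending on conventions for the longitude) with... more carefully, a slope $(c,d)$ on $\partial N(T(p,q))$ has some linking number $\ell$ with $T(p,q)$, and linking number $c\cdot a_j$ with each $J_{a_j}$, since $J_{a_j}$ links $T(p,q)$ exactly $a_j$ times along the longitude direction. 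The two boundary components are isotopic in $S^3-N(K)$, hence have equal linking number with every $J_{a_j}$; that is automatically consistent here since they are parallel on the torus, so this alone is not enough — the key extra input must come from essentiality and the structure of the torus knot exterior.

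The main tool I would use is the classification of essential annuli and the incompressible surface structure of torus knot exteriors together with the disc/cabling argument already deployed in \refprop{Atoroidal}. Concretely: cap off $A$ in $S^3$. If the slope $(c,d)$ is the meridian, then $\bdy A$ bounds a disc meeting $T(p,q)$ once, and $A$ would be compressible or boundary-parallel — so we may assume the slope is not meridional. If the slope is the longitude (the Seifert-framed longitude, linking number $0$ with $T(p,q)$ if we use that framing, or the cabling slope $(p,q)$), then $\bdy A$ is isotopic to $T(p,q)$ pushed off the torus. I would then argue that $A$ together with an annulus on $\partial N(T(p,q))$ between the two boundary curves forms a torus $T'$ in $S^3-N(K)$; if $T'$ is compressible or boundary-parallel we trace back to $A$ being inessential, and if $T'$ is essential we contradict \refprop{Atoroidal} (under its hypothesis that some $a_i>q$ is not a multiple of $q$). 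This reduces the annulus question to the already-proven toroidal question, modulo checking that the torus produced is genuinely essential — which is where care is needed.

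The step I expect to be the main obstacle is handling the case where the slope of $\bdy A$ is the cabling slope $(p,q)$ itself, i.e.\ where $\bdy A$ is isotopic in $S^3$ to $T(p,q)$. In that situation the annulus $A$ can be the cabling annulus coming from the Seifert fibering of the torus knot exterior, and gluing $A$ to the Heegaard-torus annulus may produce a boundary-parallel or compressible torus rather than an essential one, so the reduction to \refprop{Atoroidal} breaks down. To deal with this I would use \refprop{BraidMove} as in the proof of \refprop{Atoroidal}: isotope $T(p,q)$ to a closed braid on $a_n$ strands with $J_{a_n}$ as braid axis, observe that the cabling annulus for $T(p,q)$ would have to be disjoint from the disc $D_{a_n}$ or intersect it in curves forcing a generalised cabling structure, and then derive that $q$ divides $a_n$ and more generally that $q$ divides each $a_i$ with $a_i>q$ — contradicting the hypothesis of \refprop{Atoroidal} (which is the hypothesis on $K$ here, since $K$ is "as in \refprop{Atoroidal}"). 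Throughout I would also invoke Tsau's result that torus knot exteriors are atoroidal and the braid-index facts (\refthm{FranksWilliams}, \reflem{WilliamsTrivialCase}) to pin down $b=q$, mirroring the earlier argument. The remaining cases (non-meridional, non-longitudinal, non-cabling slopes) should be dispatched quickly: such a curve has linking number with $T(p,q)$ incompatible with bounding an annulus whose other boundary is a parallel curve unless it forces a Seifert structure, again handled by the cabling argument.
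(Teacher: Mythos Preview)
Your approach shares the endgame with the paper's proof --- form a torus from $A$ together with an annulus on $\partial N(T(p,q))$, then invoke \refprop{Atoroidal} --- but you are missing the organising step that makes the argument clean. The paper does not case-split by the slope of $\partial A$. Instead it first views $A$ inside the larger manifold $S^3-N(T(p,q))$ and asks whether $A$ is essential \emph{there}. By Tsau, the torus knot exterior has a unique essential annulus (the cabling annulus), and the paper invokes Lee's Lemma~5.1 to say that this cabling annulus is punctured by the component $J_{a_i}$ with $a_i>q$ not a multiple of $q$; hence $A$ cannot be essential in $S^3-N(T(p,q))$. That single citation disposes of exactly the case you flagged as the main obstacle. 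The remaining possibilities --- $A$ compressible or boundary-parallel in $S^3-N(T(p,q))$ --- are then short: compressible yields, via boundary-irreducibility of the torus knot exterior, a compressing disc for $A$ in $S^3-N(K)$; boundary-parallel yields a solid torus $V$ with $\partial V=A\cup B$ that must trap some $J_{a_j}$, and one argues $\partial V$ is an essential torus in $S^3-N(K)$, contradicting \refprop{Atoroidal}.

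Your direct slope-by-slope analysis is not wrong in spirit, but the step you identify as the obstacle --- the cabling slope --- is a genuine gap as written. Your sketch (``the cabling annulus would have to be disjoint from $D_{a_n}$ or intersect it in curves forcing a generalised cabling structure, and then $q$ divides each $a_i>q$'') is precisely the content of Lee's lemma, and establishing it from scratch requires more than re-running the \refprop{Atoroidal} argument: there you had a closed torus in the complement of $T(p,q)$ and could invoke the generalised-cabling and braid-index machinery, whereas here you have an annulus with boundary \emph{on} $\partial N(T(p,q))$, so those tools do not apply directly. Separately, your claim that in the non-cabling cases ``if $T'$ is compressible or boundary-parallel we trace back to $A$ being inessential'' needs justification: a compressing disc for $T'$ may cross the glued boundary annulus, and extracting a compression or boundary-compression for $A$ from it is not automatic. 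The paper's move of first passing to $S^3-N(T(p,q))$ sidesteps both issues.
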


\begin{proof}
Suppose that $S^3 - N(K)$ has an essential annulus $A$ with both boundary components in $\partial N(T(p,q))$. Either $A$ is essential in $S^3-N(T(p,q))$, or inessential.
The exterior of a torus knot has just one essential annulus by work of Tsau~\cite{Tsau}. 

Suppose first that $A$ is the essential annulus in $S^3-N(T(p,q))$. If there is $a_i>q$ that is not a multiple of $q$, then
by work of Lee, \cite[Lemma~5.1]{Lee:Unknotted} that essential annulus would be punctured by $J_{a_i}$, where $a_i>q$ is not a multiple of $q$. This would be a contradiction. 

So suppose that $a_n=q$ and $n>1$.
The unique essential annulus in $S^3-T(p,q)$ lies on the trivial torus $T\subset S^3$ onto which $T(p,q)$ is projected, and $A=T-T(p,q)$.
Each component $J_{a_i}$ is disjoint from $A$ and from $T(p,q)$, so it must lie in one of the solid tori on either side of $T$. Denote by $\omega_i$ the winding number of $J_{a_i}$ in the solid torus of $\mathbb{S}^3-T$ in which it lies. Then the linking number of $J_{a_i}$ and $T(p, q)$ is equal to $p\omega_i$ or $q\omega_i$. But that linking number is known to be $a_i$, $i=1, \dots, n$, and $a_n=q$. Since there exists $a_i<a_n=q<p$, this implies the linking number of $J_{a_i}$ and $T(p,q)$ is zero, which is a contradiction.

Thus $A$ is not essential in $S^3-N(T(p,q))$, so it must be compressible, boundary compressible, or boundary parallel in $S^3-N(T(p,q))$. Observe that a boundary compressible annulus is in fact boundary parallel, using the fact that $S^3-N(T(p,q))$ is irreducible and boundary irreducible.
 
Consider first that $A$ is boundary parallel to an annulus $B$ in $\partial N(T(p,q))$. Then $A\cup B$ bounds a solid torus $V$ in $S^3-N(T(p,q))$. Since $A$ is not boundary parallel in $S^3-N(K)$, at least one $J_{a_j}$ must be inside $V$. In addition, $J_{a_j}$ has wrapping number greater than zero in $V$, or else $T(p, q)$ and $J_{a_j}$ would have linking number equal to zero, which is a contradiction. But $J_{a_j}$ is an unknot, whose complement admits no essential tori (e.g.\ \cite[page~15]{hatcher2007notes}). Thus $V$ is also unknotted in $S^3$. This implies that $B$ is a meridional annulus of $\partial N(T(p,q))$. If $\partial V$ is boundary parallel to $J_{a_j}$, then $J_{a_j}$ is the core of $\partial V$. Hence, the linking number between $T(p,q)$ and $J_{a_j}$ would be one, which is not possible. Thus, as $\partial V$ is not boundary parallel to $J_{a_j}$, $\partial V$ is an essential torus for $S^3 - N(K)$. This contradicts \refprop{Atoroidal}.

Assume now that $A$ is compressible in $S^3-N(T(p,q))$. Then there is a compression disk $D$ for $A$ in $S^3-N(T(p,q))$. Surgering $A$ along $D$ yields two discs, $D_1$ and $D_2$, such that $\partial A = \partial D_1 \cup \partial D_2$.
Since $S^3-N(T(p,q))$ is boundary irreducible, $\partial D_i$ bounds a disk $E_i$ on $\partial N(T(p,q))$. Thus, by pushing $E_i$ slightly off of $\bdy N(T(p, q))$ in $S^3-N(K)$, we obtain a compressing disc for $A$ in $S^3-N(K)$, which contradicts our assumption that $A$ is essential. Therefore, $A$ is not compressible. 

Thus $A$ cannot have both boundary components on $\bdy N(T(p,q))$.
\end{proof}

\begin{lemma}\label{Lem:AnnulusJ_{a_i}}
  Let $K$ be as in \refprop{Atoroidal}. Then $K$
has no essential annulus with both boundary components on one $\bdy N(J_{a_j})$.
\end{lemma}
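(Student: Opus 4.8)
## Proof proposal for Lemma~\ref{Lem:AnnulusJ_{a_i}}

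The plan is to follow the same pattern as the proof of \reflem{AnnulusT(p,q)}, working through the possible non-essential behaviours of an annulus $A$ with $\bdy_1 A, \bdy_2 A \subset \bdy N(J_{a_j})$ for a single fixed $j$. Suppose such an essential $A$ exists. The complement of $J_{a_j}$ alone is a solid torus, so $A$, viewed in $S^3 - N(J_{a_j})$, is necessarily compressible, boundary compressible, or boundary parallel (a solid torus has no essential annuli). As in \reflem{AnnulusT(p,q)}, a boundary compressible annulus in $S^3-N(J_{a_j})$ that is not boundary parallel would give a compressing disc for $\bdy N(J_{a_j})$; but $\bdy_k A$ are isotopic in $S^3-N(K)$ and bound on only one side, so I would reduce the boundary-compressible case to the boundary-parallel case using irreducibility and boundary irreducibility of $S^3-N(K)$ (noting $K$ is not a split link and no component is unknotted-and-split).

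First I handle the case that $A$ is compressible in $S^3 - N(J_{a_j})$: surgering $A$ along a compressing disc $D$ gives two discs $D_1, D_2$ with $\bdy D_i \subset \bdy N(J_{a_j})$, hence each $\bdy D_i$ bounds a disc $E_i$ on the torus $\bdy N(J_{a_j})$ (since that torus is incompressible only from one side, but a disc boundary on it bounds a disc in the torus or is the meridian; here $\bdy D_i$ bounds a disc in $S^3 - N(J_{a_j})$, and by irreducibility of that solid torus each $\bdy D_i$ either bounds a disc on $\bdy N(J_{a_j})$ or is a meridian of $J_{a_j}$). If some $\bdy D_i$ is a meridian, then $A$ was a meridional annulus and one boundary of $A$ would be a meridian of $J_{a_j}$; I push $E_i$ off the boundary torus and check this contradicts essentiality of $A$ in $S^3-N(K)$, exactly as in the previous lemma. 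Second, suppose $A$ is boundary parallel in $S^3 - N(J_{a_j})$, parallel to an annulus $B \subset \bdy N(J_{a_j})$, with $A \cup B$ bounding a solid torus $V$. Since $A$ is not boundary parallel in $S^3-N(K)$, $V$ must contain another component of $K$ — either $T(p,q)$ or some $J_{a_k}$, $k\neq j$. I would argue each such component has wrapping number zero in $V$ is impossible: if $T(p,q) \subset V$, then $T(p,q)$ and $J_{a_j}$ have linking number zero unless $B$ is longitudinal, but the slope of $\bdy A$ on $\bdy N(J_{a_j})$ is forced; if instead $\bdy A$ is longitudinal (so $B$ is an annulus neighbourhood of a longitude), then $J_{a_j}$ is isotopic into $\bdy V$, and $V \cup N(J_{a_j})$ would have boundary an essential torus in $S^3-N(K)$ unless $V$ is a neighbourhood of $J_{a_j}$ itself — but then $A$ is boundary parallel in $S^3-N(K)$, a contradiction; and $\bdy \tilde V$ being essential contradicts \refprop{Atoroidal}. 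The case $J_{a_k} \subset V$ is handled by the linking-number argument of \reflem{TwoComponents}: if $J_{a_k} \subset V$ with $V$ a regular neighbourhood retracting to $J_{a_j}$, then $\mathrm{lk}(J_{a_k}, J_{a_j})$ would have to be nonzero, contradicting that all the $J$'s are unknots bounding disjoint discs meeting each other zero times.

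The main obstacle I expect is the boundary-parallel case when $V$ contains $T(p,q)$ with $\bdy A$ a longitude of $J_{a_j}$: here I cannot immediately appeal to linking numbers (a longitude has linking zero with $J_{a_j}$ automatically), and I must instead show that the torus $\bdy(V \cup N(J_{a_j}))$ — or a torus isotopic to $\bdy V$ — is genuinely essential in $S^3 - N(K)$ so as to invoke \refprop{Atoroidal}; ruling out that this torus is compressible or boundary parallel requires knowing that $T(p,q)$ together with the remaining $J_{a_k}$'s sits "knottedly enough" inside $V$, which in turn uses that $T(p,q)$ is a nontrivial knot and $n>1$, i.e.\ at least one $J_{a_k}$ with $k \neq j$ remains. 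Organizing this so that it reduces cleanly to \refprop{Atoroidal} rather than requiring a fresh incompressibility argument is the delicate point; I would aim to show that if $\bdy V$ were inessential, then either $A$ is boundary parallel in $S^3-N(K)$ or $K$ fails to be atoroidal, contradicting the hypothesis that $K$ is as in \refprop{Atoroidal}.
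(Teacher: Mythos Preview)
Your overall structure matches the paper's, but several of the linking-number deductions are stated backwards and one subcase is genuinely mishandled. In the compressible case, if $\bdy D_i$ does not bound a disc on $\bdy N(J_{a_j})$ then $\bdy D_i$ is a \emph{longitude} of $J_{a_j}$ (a meridian of the exterior solid torus), not a meridian of $J_{a_j}$; the paper then observes that undoing the surgery shows $A$ is boundary parallel in $S^3-N(J_{a_j})$, reducing to the other case rather than giving an immediate contradiction. In the boundary-parallel case with $T(p,q)\subset V$, your implication is reversed: if $B$ is longitudinal then the core of $V$ has linking number zero with $J_{a_j}$, so $T(p,q)$ would too, which is impossible---hence the core of $V$ is \emph{not} longitudinal. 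Your treatment of $J_{a_k}\subset V$ (with $T(p,q)\notin V$) is also wrong: having $J_{a_k}$ inside $V$ does not force $\mathrm{lk}(J_{a_k},J_{a_j})\neq 0$; the paper instead uses $\mathrm{lk}(J_{a_k},T(p,q))>0$ to get positive wrapping in $V$, then $\mathrm{lk}(J_{a_k},J_{a_j})=0$ to force the core of $V$ to be a longitude, and finally argues $J_{a_k}$ is either the core (so isotopic to $J_{a_j}$, impossible) or $\bdy V$ is essential, contradicting \refprop{Atoroidal}.

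The point you correctly flag as the main obstacle---$T(p,q)\subset V$ with the slope on $\bdy N(J_{a_j})$ not immediately ruled out---is resolved in the paper by a different and cleaner mechanism than your proposed $\bdy(V\cup N(J_{a_j}))$. First one shows that \emph{every} $J_{a_k}$, $k\neq j$, must also lie inside $V$: if some $J_{a_k}$ lay in $S^3-(V\cup N(J_{a_j}))$, its winding number there would be nonzero (else linking zero with $T(p,q)$) but then its linking with $J_{a_j}$ would be nonzero, a contradiction. With at least two components inside $V$ (here $n>1$ is used), $\bdy V$ is not boundary parallel to the inside. Writing the core of $V$ as a torus knot $T(a,b)$ on $\bdy N(J_{a_j})$, one has $b\neq 0$ as above; if $|b|>1$ then $\bdy V$ is incompressible and not boundary parallel on the outside, so $\bdy V$ is essential, contradicting \refprop{Atoroidal}. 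The remaining case $|b|=1$ is handled not via atoroidality at all but by exhibiting an explicit boundary-compression disc for $A$: a longitudinal disc for $J_{a_j}$ whose boundary decomposes into an arc on $A$ and an arc on $\bdy N(J_{a_j})$. This last geometric step is the idea missing from your outline.
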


\begin{proof}
Suppose that $S^3-N(K)$ has an essential annulus $A$ with both boundary components on $\partial N(J_{a_j})$. Since $S^3-N(J_{a_j})$ is a solid torus, and the solid torus admits no essential annuli, $A$ is not essential in $S^3-N(J_{a_j})$. Thus $A$ is either compressible or boundary parallel in $S^3-N(J_{a_j})$.

\vspace{2mm}

\textbf{Case A: }
Suppose $A$ is boundary parallel, parallel to an annulus $B$ in $\bdy N(J_{a_j})$. Then $A\cup B$ bounds a solid torus $V$ in $S^3-N(J_{a_j})$. Since $A$ is not boundary parallel in $S^3-N(K)$, at least one component $C$ of $K$ must be inside $V$. 

\textbf{Case A1: }
Consider first that $C = T(p, q)$. Then $T(p, q)$ has wrapping number greater than zero in $V$, for otherwise $J_{a_j}$ and $T(p, q)$ would have zero linking number, a contradiction. Note this implies that $\bdy V$ is incompressible to its inside. 

Suppose that some circle $J_{a_k}$ with $j\neq k$ lies in $S^3-V$. Then we may isotope $J_{a_k}$ to lie outside of $W = N(J_{a_j})\cup V$, which is a regular solid torus neighbourhood of the unknot $J_{a_j}$.
Denote by $\omega$ the winding number of $J_{a_k}$ in $S^3-W$. If $\omega = 0$, then the linking number between $J_{a_k}$ and $T(p, q)$ is zero. Thus, $\omega \neq 0$.
But then this implies that the linking number between $J_{a_j}$ and $J_{a_k}$ is nonzero, a contradiction. 
Thus all circles $J_{a_{1}}, \dots, J_{a_{i-1}}, J_{a_{i+1}}, \dots, J_{a_n}$ are inside $V$ in this case.
Because at least two components of $K$ lie inside $V$, $\bdy V$ is not boundary parallel to the inside. 

The core of $V$ forms a torus knot $T(a,b)$ on $N(J_{a_j})$. Note $b>0$ or else $T(p,q)$ runs around a longitude of $N(J_{a_j})$ and hence has linking number zero with $J_{a_j}$, a contradiction.

Suppose $b=\pm 1$, so the core of $V$ has the form of the trivial knot $T(a, \pm 1)$. Then there exists a disc $D$ in $S^3-N(K)$ that is a longitude for $\bdy N(J_{a_j})$ whose boundary $\bdy D$ can be divided into two arcs, one of which meets $A\subset \bdy V$ in a nontrivial arc, and the other meets $\bdy N(J_{a_j})$. See \reffig{BoundaryCompression}.
This is an essential boundary compression disc for $A$, contradicting the fact that $A$ is essential.

\begin{figure}
  %% Creator: Inkscape 1.2.1 (9c6d41e410, 2022-07-14), www.inkscape.org
%% PDF/EPS/PS + LaTeX output extension by Johan Engelen, 2010
%% Accompanies image file 'LongitudinalDiscUpdate.pdf' (pdf, eps, ps)
%%
%% To include the image in your LaTeX document, write
%%   \input{<filename>.pdf_tex}
%%  instead of
%%   \includegraphics{<filename>.pdf}
%% To scale the image, write
%%   \def\svgwidth{<desired width>}
%%   \input{<filename>.pdf_tex}
%%  instead of
%%   \includegraphics[width=<desired width>]{<filename>.pdf}
%%
%% Images with a different path to the parent latex file can
%% be accessed with the `import' package (which may need to be
%% installed) using
%%   \usepackage{import}
%% in the preamble, and then including the image with
%%   \import{<path to file>}{<filename>.pdf_tex}
%% Alternatively, one can specify
%%   \graphicspath{{<path to file>/}}
%% 
%% For more information, please see info/svg-inkscape on CTAN:
%%   http://tug.ctan.org/tex-archive/info/svg-inkscape
%%
\begingroup%
  \makeatletter%
  \providecommand\color[2][]{%
    \errmessage{(Inkscape) Color is used for the text in Inkscape, but the package 'color.sty' is not loaded}%
    \renewcommand\color[2][]{}%
  }%
  \providecommand\transparent[1]{%
    \errmessage{(Inkscape) Transparency is used (non-zero) for the text in Inkscape, but the package 'transparent.sty' is not loaded}%
    \renewcommand\transparent[1]{}%
  }%
  \providecommand\rotatebox[2]{#2}%
  \newcommand*\fsize{\dimexpr\f@size pt\relax}%
  \newcommand*\lineheight[1]{\fontsize{\fsize}{#1\fsize}\selectfont}%
  \ifx\svgwidth\undefined%
    \setlength{\unitlength}{129.66864395bp}%
    \ifx\svgscale\undefined%
      \relax%
    \else%
      \setlength{\unitlength}{\unitlength * \real{\svgscale}}%
    \fi%
  \else%
    \setlength{\unitlength}{\svgwidth}%
  \fi%
  \global\let\svgwidth\undefined%
  \global\let\svgscale\undefined%
  \makeatother%
  \begin{picture}(1,0.58589374)%
    \lineheight{1}%
    \setlength\tabcolsep{0pt}%
    \put(0,0){\includegraphics[width=\unitlength,page=1]{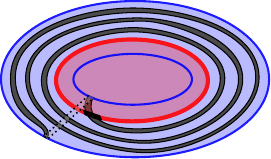}}%
    \put(0.36860258,0.22943075){\color[rgb]{0,0,0}\makebox(0,0)[lt]{\lineheight{1.25}\smash{\begin{tabular}[t]{l}$A\cap \bdy D$\end{tabular}}}}%
    \put(0,0){\includegraphics[width=\unitlength,page=2]{LongitudinalDiscUpdate.pdf}}%
  \end{picture}%
\endgroup%

  \caption{The blue torus is the trivial torus $\bdy N(J_{a_j})$. The solid torus $V$ is shown in dark gray, embedded here as a $(1,3)$-torus knot, which is an unknot. The disc $D$ is shown in red. Its boundary is a longitude, with the solid red arc on $\bdy D\cap \bdy N(J_{a_j})$ and the short black arc on $(A\subset \bdy V) \cap \bdy D$. }
  \label{Fig:BoundaryCompression}
\end{figure}

Since $|b|>1$, $\bdy V= \bdy N(T(a,b))$ is incompressible and not boundary parallel to the outside, i.e.\ in the solid torus $S^3-J_{a_j}$. 

This implies that in all cases $\bdy V$ is essential in $S^3-N(K)$ contradicting \refprop{Atoroidal}.

\textbf{Case A2: }
The torus knot $T(p, q)$ cannot lie inside $V$ by the previous case. So some $C=J_{a_k}$ with $j\neq k$ lies inside $V$. The wrapping number of $J_{a_k}$ inside $V$ must be different from zero as $J_{a_k}$ and $T(p, q)$ have positive linking number. Since $J_{a_k}$ and $J_{a_j}$ have zero linking number, $V$ must be a longitude of $\bdy N(J_{a_j})$. If $J_{a_k}$ is the core of $V$, then $J_{a_j}$ and $J_{a_k}$ are isotopic in $S^3-N(T(p,q))$, a contradiction. So $J_{a_k}$ is not the core of $V$. But then $\bdy V$ is incompressible and not boundary parallel to the inside in $S^3-K$, and incompressible and not boundary parallel to the outside in $S^3-K$, contradicting \refprop{Atoroidal}. 

\vspace{1mm}

\textbf{Case B: }
Suppose $A$ is compressible in $S^3-N(J_{a_j})$. Then there is a compression disk $D$ for $A$ in $S^3-N(J_{a_j})$. Surgering $A$ along $D$ yields two discs, $D_1$ and $D_2$, such that $\partial A = \partial D_1 \cup \partial D_2$.
If one of $\bdy D_1$ or $\bdy D_2$ bounds a disk $E$ on $\bdy N(J_{a_j})$, then by considering a disc with boundary in $A$ close to $E$, we see that $A$ is also compressible in $S^3-N(K)$, a contradiction. So suppose that neither $\bdy D_1$ nor $\bdy D_2$ bounds a disk on $\bdy N(J_{a_j})$. Then $D_1$ and $D_2$ are discs in the solid torus $S^3-N(J_{a_j})$ with nontrivial boundary on $\bdy N(J_{a_j})$ and hence both are meridians of $S^3-N(J_{a_j})$, i.e.\ with $\bdy D_1$ and $\bdy D_2$ forming longitudes of $\bdy N(J_{a_j})$.
Undoing the surgery along $D$, it follows that $A$ is boundary parallel in $S^3-N(J_{a_j})$. Thus we have a contradiction to Case A. 

Therefore, $S^3-N(K)$ has no essential annulus with both boundary components in one $\partial N(J_{a_j})$.
\end{proof}

\begin{proposition}\label{Prop:NoEssentialAnnuli}
  The link $K$ as in \refprop{Atoroidal}
has no essential annuli.
\end{proposition}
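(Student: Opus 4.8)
The plan is to assemble the three preceding lemmas; no new topology is needed. First I would assume for contradiction that $S^3-N(K)$ contains an essential annulus $A$. Since $A$ is properly embedded, each of its two boundary components is a simple closed curve on $\partial N(K)$, and $\partial N(K)$ is a disjoint union of tori, one for each component of $K$ (namely $\partial N(T(p,q))$ and the $\partial N(J_{a_j})$, $j=1,\dots,n$). So there are exactly two possibilities: the two boundary circles of $A$ lie on two distinct boundary tori, or they lie on a single boundary torus.

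Next I would dispatch the cases. In the two-torus case, \reflem{TwoComponents} applies immediately, since it forbids \emph{any} annulus — essential or not — whose boundary circles lie on two different components of $\partial N(K)$. In the single-torus case, the common boundary torus is either $\partial N(T(p,q))$ or some $\partial N(J_{a_j})$. If it is $\partial N(T(p,q))$, then $A$ is an essential annulus with both boundary components on $\partial N(T(p,q))$, contradicting \reflem{AnnulusT(p,q)}. If it is $\partial N(J_{a_j})$ for some $j\in\{1,\dots,n\}$, then $A$ contradicts \reflem{AnnulusJ_{a_i}}. These cases are exhaustive, so no essential annulus exists.

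There is no genuine obstacle here: all the difficulty has already been absorbed into Lemmas~\ref{Lem:TwoComponents}, \ref{Lem:AnnulusT(p,q)}, and~\ref{Lem:AnnulusJ_{a_i}}, which are organised precisely by the location of the two boundary curves of a hypothetical annulus. The one point worth verifying when writing it up is that these three lemmas really do cover all cases — in particular that one need not separately track the isotopy type of a boundary curve on its torus (meridian, longitude, or other slope), because the lemmas are phrased in terms of \emph{which} tori the boundary components lie on, not which curves they are. Once this is noted, the proposition follows in a couple of lines. Its purpose is to combine with \refprop{Atoroidal} to supply the atoroidal-and-anannular input that, together with Thurston's characterisation, will yield hyperbolicity of $K$; so although the argument is short, it closes off the final topological case.
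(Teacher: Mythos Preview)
Your proposal is correct and follows exactly the same approach as the paper: invoke \reflem{TwoComponents} to rule out annuli with boundary on two distinct components, then \reflem{AnnulusT(p,q)} and \reflem{AnnulusJ_{a_i}} to handle the remaining single-component cases. The paper's proof is just a terser version of what you wrote.
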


\begin{proof}
By \reflem{TwoComponents}, any essential annulus has both boundary components on the same component of $K$. By \reflem{AnnulusT(p,q)}, the two boundary components cannot lie on $\bdy N(T(p,q))$. By \reflem{AnnulusJ_{a_i}} the two boundary components cannot lie on one of the $\bdy N(J_{a_j})$. Thus no such annulus exists. 
\end{proof}

\begin{theorem}\label{Thm:HyperbolicAugLink}
Let $p, q$ be relatively prime integers with $1 < q < p$. Let $a_n, \dots, a_1$ be integers such that $1< a_1 < \dots < a_n <p$ with $n>1$.
Finally, assume that either there is $a_i>q$ which is not a multiple of $q$, or that $a_n=q$. 
Then the link $K= T(p, q)\cup J_{a_n}\cup \dots\cup J_{a_1}$ is hyperbolic.
\end{theorem}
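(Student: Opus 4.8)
The plan is to verify that the link exterior $M = S^3 - N(K)$ satisfies the hypotheses of Thurston's hyperbolisation theorem for Haken manifolds: that $M$ is irreducible, boundary-irreducible, atoroidal, and anannular, with nonempty torus boundary. Two of these four ingredients are precisely the propositions just established, namely \refprop{Atoroidal} (no essential tori) and \refprop{NoEssentialAnnuli} (no essential annuli); the other two are elementary, so the theorem follows by assembling the pieces and quoting geometrisation.

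Concretely, first I would record that $K$ is non-split: each $J_{a_j}$ bounds a disc meeting $T(p,q)$ transversely in $a_j>0$ points, so it has nonzero linking number with $T(p,q)$, and hence in any decomposition along a $2$-sphere every $J_{a_j}$ lies on the same side as $T(p,q)$. Thus $M$ is irreducible. Since $n>1$, the link $K$ has at least three components, so $M$ has at least three torus boundary components; in particular $M$ is not a solid torus, and a connected irreducible $3$-manifold with nonempty torus boundary that is not a solid torus is boundary-irreducible (a compressing disc for a boundary torus would, by irreducibility, exhibit that torus as bounding a solid torus inside $M$, which is impossible in a link exterior with more than one component). Being irreducible with incompressible boundary, $M$ is Haken. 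Atoroidality is \refprop{Atoroidal}, and the absence of essential annuli is \refprop{NoEssentialAnnuli}. By Thurston's hyperbolisation theorem, a Haken $3$-manifold with nonempty torus boundary that is atoroidal and anannular is hyperbolic unless it is Seifert fibred; but the only Seifert fibred $3$-manifold with torus boundary that is atoroidal and anannular is the solid torus (every other such piece — $T^2\times I$, the twisted $I$-bundle over the Klein bottle, torus-knot exteriors, cable spaces — carries an essential vertical annulus), and $M$ is not a solid torus. Hence $M$ is hyperbolic, which is the assertion of \refthm{HyperbolicAugLink}.

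The substantive difficulty of the whole argument lies not in this final assembly but in \refprop{Atoroidal} and \refprop{NoEssentialAnnuli} — controlling essential tori and annuli through the braid-index and generalised-cabling machinery — which is exactly why those are isolated as separate statements. Given them, the only point here that requires any care is citing the correct version of geometrisation and disposing of the exceptional Seifert fibred cases; the hypothesis $n>1$, equivalently that $K$ has at least three components, makes that disposal immediate, since each exceptional piece has at most two cusps.
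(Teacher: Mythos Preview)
Your proof is correct and follows essentially the same route as the paper: invoke \refprop{Atoroidal} and \refprop{NoEssentialAnnuli} for the substantive steps, check irreducibility and boundary-irreducibility, and conclude via Thurston's hyperbolisation theorem. The only difference is that the paper outsources irreducibility and boundary-irreducibility to \cite[Lemma~5.1]{depaiva-purcell2021satellites}, whereas you supply the elementary linking-number argument directly; your additional paragraph disposing of the small Seifert fibred pieces is correct but redundant once anannularity is in hand, since every Seifert fibred manifold with nonempty torus boundary and incompressible boundary contains an essential vertical annulus.
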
 

\begin{proof}
By de~Paiva and Purcell \cite[Lemma~5.1]{depaiva-purcell2021satellites}, the link exterior is irreducible and boundary irreducible. By \refprop{Atoroidal}, it is atoroidal. By \refprop{NoEssentialAnnuli}, it is anannular. Therefore it is hyperbolic by Thurston's hyperbolization theorem for Haken manifolds~\cite{thurston:bulletin}. 
\end{proof}

Combining \refthm{HyperbolicAugLink} and de~Paiva and Purcell \cite[Theorem~5.6]{depaiva-purcell2021satellites}, we completely classify the geometric types of the links $T(p,q)\cup J_{a_1}\cup \dots J_{a_n}$.

\begin{theorem}\label{Thm:AugHyperbolic}
Fix relatively prime integers $1<q<p$, and integers $1<a_1< \dots< a_n<p$. Then the link
$K = T(p,q)\cup J_{a_1}\cup \dots J_{a_n}$
is hyperbolic if and only if one of the following holds:
\begin{itemize}
\item either all $a_i<q$, or
\item $a_n=q$ and $n>1$, or
\item there is $a_i>q$ which is not a multiple of $q$.
\end{itemize}
\end{theorem}

\begin{proof}
When $n=1$, the link $K=T(p,q)\cup J_{a_1}$ is the Dehn-filling parent of a twisted torus knot; this has been treated by Lee~\cite{Lee:Cable, Lee:Unknotted}. If $n=1$ and $a_1 = q$, then~\cite[Theorem~1]{Lee:Cable} implies that infinitely many Dehn surgeries along $J_{a_1}$ yield non-hyperbolic knots. Therefore, Thurston's hyperbolic Dehn filling theorem~\cite{thurston:notes} implies $K$ is not hyperbolic. In fact, the proof of~\cite[Theorem~1]{Lee:Cable} implies $K$ is annular. If $n=1$ and $a_1$ is a multiple of $q$, then $K$ is a satellite link, by \cite[Proposition~3.2]{depaiva-purcell2021satellites}.
If $n=1$ and $a_1$ is not a multiple of $q$, then $K$ is hyperbolic by~\cite[Proposition~5.7]{Lee:Unknotted}. This proves the result in the case $n=1$. 

In the case $n>1$, if there is $a_i>q$ that is not a multiple of $q$, or if $a_n=q$, then $K$ is hyperbolic by \refthm{HyperbolicAugLink}.
If $n>1$ and all $a_i$ are less than $q$, then no $a_i$ is a multiple of $q$, and $K$ is hyperbolic by \cite[Theorem~5.6]{depaiva-purcell2021satellites}.
Finally, for the converse, if $n>1$, and there is some $a_i>q$, and all $a_i>q$ are multiples of $q$, then $K$ is satellite by \cite[Theorem~5.6]{depaiva-purcell2021satellites}.
\end{proof}

\begin{corollary}\label{Cor:FullTwistConj}
Let $p,q$ be relatively prime integers with $1<q<p$, and let $a_1, \dots, a_n$ and $s_1, \dots, s_n$ be integers such that $1<a_1<\dots<a_n<p$ and $s_i>0$ for all $i$. 
Then there exists $B\gg 0$ such that if each $s_i>B$, the T-link
\[ T((a_1, a_1s_1), \dots, (a_n,a_ns_n), (p,q)) \]
is hyperbolic if and only if one of the following holds:
\begin{itemize}
\item all $a_i< q$, or
\item $a_n=q$ and $n>1$, or
\item there is $a_i> q$ which is not a multiple of $q$.
\end{itemize}
\end{corollary}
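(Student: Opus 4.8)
The plan is to realize the T-link in question as a Dehn filling of the parent link of \refdef{Unknots} and then invoke \refthm{AugHyperbolic} together with Thurston's hyperbolic Dehn filling theorem. Write $K_0 = T(p,q)\cup J_{a_1}\cup\dots\cup J_{a_n}$ for the augmented link of \refdef{Unknots}. Performing $-1/s_i$ Dehn surgery on $J_{a_i}$ (with the standard orientation convention) inserts $s_i$ positive full twists on the $a_i$ leftmost strands running just below $J_{a_i}$, and $s_i$ full twists on $a_i$ strands is exactly the torus braid $(\sigma_1\cdots\sigma_{a_i-1})^{a_i s_i}$. Since the smaller $J_{a_j}$ are stacked above the larger ones, filling all $n$ of the cusps $\partial N(J_{a_i})$ of $S^3 - N(K_0)$ along the slopes $-1/s_1,\dots,-1/s_n$ yields precisely the complement of $T((a_1,a_1s_1),\dots,(a_n,a_ns_n),(p,q))$. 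This identification is carried out in \cite{depaiva-purcell2021satellites}.

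\textbf{Forward direction.} Suppose either all $a_i<q$, or there is some $a_i>q$ that is not a multiple of $q$. Then \refthm{AugHyperbolic} says $S^3 - N(K_0)$ is hyperbolic. Thurston's hyperbolic Dehn filling theorem~\cite{thurston:notes} provides, for each cusp of $S^3 - N(K_0)$, a finite set of exceptional slopes such that filling along non-exceptional slopes on all cusps simultaneously produces a hyperbolic manifold. Since the slope $-1/s_i$ leaves every prescribed finite set of slopes on its cusp once $s_i$ is large, there is an integer $B$ — depending on $p,q,a_1,\dots,a_n$ but not on the $s_i$ — such that $s_i>B$ for all $i$ forces every filling slope to be non-exceptional. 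For such $s_i$, the T-link complement is hyperbolic.

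\textbf{Converse direction.} Suppose instead that there is some $a_i>q$ and every $a_j>q$ is a multiple of $q$. We must show the T-link is not hyperbolic, and here one cannot argue purely by Dehn filling, since filling can destroy essential surfaces. Instead one uses that such fully twisted T-links are genuinely satellite (or, in degenerate cases, torus links): using \refprop{NoInteriorFullqTwists} to absorb any $a_j=q$ into the final $(p,q)$ block, the T-link reduces to one of the form treated in \cite{depaiva-purcell2021satellites}, where it is exhibited as a generalized $q$-cabling of a torus knot — hence a satellite when that torus knot is nontrivial, and a torus link otherwise — for every value of the $s_j$. (Alternatively, one can check that the essential torus built in the proof that $K_0$ is satellite is disjoint from all the $J_{a_j}$, each of which lies in the solid torus it bounds, and that it remains essential after the fillings because $T(p,q)$ has wrapping number $q\ge 2$ in that solid torus; this makes the T-link toroidal.) Either way the T-link is not hyperbolic. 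With $B$ chosen as above, we conclude that for $s_i>B$ the T-link is hyperbolic if and only if either all $a_i<q$, or there is $a_i>q$ that is not a multiple of $q$.

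The forward direction is essentially automatic once the Dehn filling picture is in place, and controlling the exceptional slopes is immediate because the twisting slopes run off to infinity. The main obstacle is the converse: it does not follow from the geometry of the parent by a soft Dehn filling argument, and instead rests on the structural fact — established in \cite{depaiva-purcell2021satellites} and already used in \refthm{AugHyperbolic} — that fully twisted T-links all of whose twisting regions wider than $q$ have width a multiple of $q$ are satellite, independent of the amount of twisting.
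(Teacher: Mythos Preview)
Your proof is correct and follows essentially the same route as the paper: realize the T-link as a Dehn filling of the augmented link $K_0$, invoke \refthm{AugHyperbolic} together with Thurston's hyperbolic Dehn filling theorem for the forward direction, and appeal to the satellite results of \cite{depaiva-purcell2021satellites} for the converse. The only substantive difference is that the paper additionally cites Lee~\cite{Lee:Cable} to treat the annular case $n=1$, $a_1=q$ separately, whereas your stated negation in the converse (``there is some $a_i>q$ and every $a_j>q$ is a multiple of $q$'') does not quite cover the possibility that some $a_i=q$ with no $a_j>q$.
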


\begin{proof}
By \refthm{AugHyperbolic}, the link
$K=T(p,q)\cup J_{a_1} \cup \dots \cup J_{a_n}$
is hyperbolic if and only if the $a_i$ satisfy the hypotheses of the corollary. Obtain the given T-link by Dehn filling the link components $J_{a_1}, \dots, J_{a_n}$ along slopes $1/s_1, \dots, 1/s_n$, respectively. Suppose first that the link $K$ is hyperbolic. Then Thurston's hyperbolic Dehn filling theorem~\cite{thurston:notes} implies that if the $s_i$ are sufficiently large, then Dehn filling yields a hyperbolic T-link. Now suppose $K$ is not hyperbolic. If $K$ is a satellite link, then Dehn filling $K$ yields a satellite T-link, by de~Paiva and Purcell~\cite[Theorem~5.6]{depaiva-purcell2021satellites}. Otherwise $K$ is a torus link, $n=1$ and $a_1=q$, and Dehn filling yields a torus link by Lee~\cite{Lee:Cable}. 
\end{proof}

Note that \refthm{MainGeneric} in the introduction follows immediately from \refcor{FullTwistConj}. 

%%%%%%%%%%%%%%%%%%%%%%%%%%%%%%%%%%%%%%%%%%%%%%%%%%%%%%%%%%%%%%%%%
\section{Hyperbolicity with effective full twist bounds}\label{Sec:SomeParameters}

While \refcor{FullTwistConj} is quite broad, unfortunately the constant $B$ in that theorem is not explicit, and so it may be difficult to apply in practice. In this section we find explicit parameters which produce hyperbolic T-knots obtained by full twists.

\begin{proposition}\label{Prop:2Twists_an_p_RelPrime}
Let $a_1, \dots, a_n$, $s_1, \dots, s_n$, and $p, q, k$ be integers satisfying the following hypotheses:
\begin{itemize}
\item $p$ and $q$ are relatively prime,
\item $1<a_1<\dots < a_n$, and $0 < q < a_n < p$, 
\item each $s_i>0$, and $s_n\geq 2$,
\item $p$ and $a_n$ are relatively prime,
\item $k\geq 2$. 
\end{itemize}
Then the T-knot
$K=T((a_1, a_1s_1), \dots, (a_n, a_ns_n), (p, q + kp))$
is atoroidal. 
\end{proposition}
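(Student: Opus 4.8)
The plan is to relate $K$ to a parent link obtained by augmenting a torus knot, then use the atoroidality already established in \refprop{Atoroidal} together with a surgery argument, being careful because the twisting parameter on the largest braid is $q+kp$ rather than a pure full twist on $q$ strands. First I would apply \refprop{NoInteriorFullqTwists} (or, more precisely, its proof technique via \reflem{InvertTpq}) to rewrite the outermost braid: since the last torus braid is $(p, q+kp)$, isotoping as in the proof of \refprop{NoInteriorFullqTwists}, the $k$ full twists on $p$ strands can be absorbed, showing $K$ is a Dehn filling of the parent link $K_0 = T(p,q)\cup J_{a_1}\cup\dots\cup J_{a_n}\cup J_p'$, where $J_p'$ is an unknot encircling all $p$ strands (giving full twists on $p$ strands), filled along a slope determined by $k$. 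Alternatively, and perhaps more cleanly, I would observe that $K$ is a Dehn filling of $T(p,q)\cup J_{a_1}\cup\dots\cup J_{a_n}$ along slopes $1/s_1,\dots,1/s_n$ after first moving the $kp$ extra crossings into the torus braid; the point is to present $K$ as a filling of a hyperbolic (hence atoroidal) parent whenever the parent satisfies the hypotheses of \refthm{AugHyperbolic}.

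The key steps, in order, are: (1) reduce to the case where the outermost braid is a genuine $(p,q)$-torus braid plus full twists, using the commuting-full-twists argument and \reflem{InvertTpq} exactly as in \refprop{NoInteriorFullqTwists}; (2) check that the augmented parent link $T(p,q)\cup J_{a_1}\cup\dots\cup J_{a_n}$ (possibly with an extra encircling component for the $p$-strand twists) satisfies the hypotheses ``there is $a_i>q$ not a multiple of $q$'' — here the hypothesis $\gcd(p,a_n)=1$ together with $q<a_n<p$ forces $a_n$ not to be a multiple of $q$ unless $q=1$, which is excluded, so $a_n$ itself is the required component; (3) invoke \refprop{Atoroidal} to conclude the parent is atoroidal; (4) argue that the specific Dehn fillings (slopes $1/s_i$ with $s_i>0$, and the $p$-strand twist slope governed by $k\ge 2$) cannot create an essential torus. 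Step (4) is where the hypotheses $s_n\ge 2$ and $k\ge 2$ and $\gcd(p,a_n)=1$ must be used: one shows directly, following the generalised-cabling analysis in the proof of \refprop{Atoroidal}, that any essential torus $T$ in $S^3-N(K)$ would bound a solid torus $V$ with $T(p,q+kp)$ a generalised $b$-cabling of its core for some $b\mid a_n$ with $b>1$; since $s_n\ge2$ the braid on $a_n$ strands obtained from \refprop{BraidMove} contains a full twist, so \refthm{FranksWilliams} pins the braid index at $a_n$, while \reflem{WilliamsTrivialCase} forces $b$ to equal the braid index of the relevant torus knot, and $\gcd(p,a_n)=1$ (via $k\ge 2$ making $p$ and $q+kp$ behave as a torus knot of braid index $\min(p,q+kp)=p$) yields the contradiction $b\mid\gcd(a_n,p)=1$.

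The main obstacle I anticipate is step (4): unlike in \refprop{Atoroidal}, $K$ here is a knot formed by actual twisting rather than a multi-component parent link, so I cannot simply quote hyperbolic Dehn surgery (that is exactly the non-effective route \refcor{FullTwistConj} takes). Instead the argument must be a direct, surgery-stable version of the torus/annulus analysis, and the delicate point is controlling what the essential torus looks like after the $p$-strand twisting: I need the twisting to still leave $T(p,q+kp)$ recognizable as a braid on $a_n$ strands with a full twist (using $s_n\ge 2$ and \refprop{BraidMove} with $r=a_n$, $p\mapsto p$), and I need $k\ge2$ to guarantee that $q+kp>p$, so that $T(p,q+kp)$ is a genuine torus knot with braid index $p$ and not some degenerate case. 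Verifying that these two numerical hypotheses are exactly what makes the cabling-number divisibility argument close — $b\mid a_n$, $b\mid p$, $\gcd(a_n,p)=1$, hence $b=1$, contradicting $b>1$ — is the crux, and the rest is bookkeeping parallel to \reflem{AnnulusT(p,q)} and \reflem{AnnulusJ_{a_i}}.
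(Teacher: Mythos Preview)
Your proposal has genuine gaps in both routes you sketch.

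For route (1), the parent-link approach: your claim that $\gcd(p,a_n)=1$ together with $q<a_n<p$ forces $a_n$ not to be a multiple of $q$ is simply false. Take $p=7$, $q=3$, $a_n=6$: then $\gcd(7,6)=1$ and $3<6<7$, yet $6=2q$. So the hypotheses of \refprop{Atoroidal} need not hold for the parent $T(p,q)\cup J_{a_1}\cup\dots\cup J_{a_n}$, and you cannot invoke it. You also correctly identify that even when the parent is atoroidal, Dehn filling can create essential tori, so this route cannot give an effective statement.

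For route (2), the direct cabling argument: the crucial missing ingredient is any reason why an essential torus $T$ must avoid a braid axis. You assert that $K$ would be a generalised $b$-cabling with $b\mid a_n$, but never justify disjointness of $T$ from an axis, and you cannot use \refprop{BraidMove} to rewrite $K$ on $a_n$ strands because the outermost braid is $(p,q+kp)$ with $q+kp>p>a_n$, violating the hypothesis $q\le r$ there. Your stated reason for $k\ge 2$ (ensuring $q+kp>p$) is also off: that inequality already holds for $k\ge 1$.

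The paper's proof rests on two ingredients you do not mention. First, Ito's theorem: since $K$ is a closed positive braid on $p$ strands containing at least $k\ge 2$ full twists, any essential torus is disjoint from the $p$-strand braid axis $C$ and the knot inside it is braided. This is where $k\ge 2$ is actually used, and it gives $d\mid p$ for the cabling degree $d>1$. Second, a surgery step: performing $(-1/k)$-Dehn surgery along $C$ transforms $K$ into $K'=T((a_1,a_1s_1),\dots,(a_n,a_ns_n),(p,q))$ while carrying $T$ to a torus $T'$. Now $q<a_n$ makes \refcor{TLinkBraidIndex} applicable, so $\beta(K')=a_n$, and \refthm{Williams} or \reflem{WilliamsTrivialCase} force $d\mid a_n$. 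Combined with $d\mid p$ and $\gcd(p,a_n)=1$ this yields $d=1$, the desired contradiction. The surgery is essential precisely because the braid-index-$a_n$ presentation is available for $K'$ but not for $K$.
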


\begin{proof}
Suppose that the exterior of $K$ in $S^3$ admits an essential torus $T$. By work of Ito~\cite[Theorem~1.2(3)]{Ito}, because $K$ is the closure of a braid with at least two positive full twists on $p$ strands, the torus $T$ does not intersect the braid axis $C$. Moreover, the knot inside $T$ is given by a braid. Thus there exists some integer $d>1$ such that $K$ is a generalized $d$-cabling of a knot $L$, where $L$ is the core of the solid torus bounded by $T$. As a consequence, $d$ must divide $p$. 

After $(-1/k)$-Dehn surgery along the braid axis $C$, the knot $K$ becomes the T-knot 
\[ K' = T((a_1, a_1s_1), \dots, (a_n, a_ns_n), (p, q)) \]
and the torus $T$ becomes a new torus $T'$. This will bound a solid torus $V'$ in $S^3$, with core $L'$. Because $q < a_n < a_ns_n+q$, the knot $K'$ has braid index equal to $a_n$ by \refcor{TLinkBraidIndex}.

If $L'$ is trivial, then $a_n$ is equal to $d$ by \reflem{WilliamsTrivialCase}. However, this is not possible since $\gcd(p, a_n) = 1$.

So $L'$ is knotted. Then by \refthm{Williams}, $a_n$ is equal to $d\beta(L')$, where $\beta(L')$ is the braid index of $L'$. But then $d$ divides $p$ and $d$ divides $a_n$, again contradicting $\gcd(p,a_n)=1$. 

Therefore, the exterior of $K$ admits no essential torus.
\end{proof}

We will combine the previous result with the following from~\cite{de2022torus}, which gives information on torus knots.

\begin{theorem}[Theorem~1.2 of de~Paiva~\cite{de2022torus}] \label{Thm:NonTorusKnots}
Let $p,q, a_1, \dots, a_n, s_1, \dots, s_n$ be positive integers such that $1<q<p$ and $1<a_1<\dots<a_n<p$ with $a_i\neq q$.
If $\gcd(p,q)=1$ and in addition one of the following hold:
\begin{itemize}
\item $q < a_n$, or
\item $q > a_n$ and $p$ is not of the form $bq+1$ for some $b>0$, or
\item $q > a_n$ and $p=bq+1$ for some $b>0$, but $s_1>1$, or
\item $q > a_n$, $p=bq+1$ for some $b>0$, and $s_1=1$, but $a_2\neq a_1+1$,
\end{itemize}
then $T((a_1, s_1a_1), (a_2, s_2a_2), \dots, (a_n, s_na_n), (p, q))$ is not a torus knot. \qed
\end{theorem}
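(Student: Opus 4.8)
The statement is quoted from~\cite{de2022torus}; the following sketches how I would prove it. Suppose, for contradiction, that $K=T((a_1,s_1a_1),\dots,(a_n,s_na_n),(p,q))$ is a torus knot $T(c,d)$ with $1<c<d$ and $\gcd(c,d)=1$. A torus knot is pinned down by two classical invariants: its braid index, $\beta(T(c,d))=c$, and its Seifert genus, $g(T(c,d))=(c-1)(d-1)/2$. The plan is to read $\beta(K)$ and $g(K)$ off the braid word, conclude that $K$ must be one \emph{specific} torus knot $T(c,d)$, and then refute that with a finer invariant.

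For the braid index: if $q<a_n$, then $q<a_n\le s_na_n+q$, so \refcor{TLinkBraidIndex} gives $\beta(K)=a_n$, hence $c=a_n$. If $q>a_n$ (the value $q=a_n$ being excluded), apply \refprop{BraidMove} with $r=q$ — legitimate since the first $n$ torus braids of $K$ sit above the $(p,q)$-torus braid and involve only the $a_n<q$ left-most strands — to present $K$ as the closure of a positive braid on $q$ strands that contains the full twist $(\sigma_1\cdots\sigma_{q-1})^q$; \refthm{FranksWilliams} then gives $\beta(K)=q$, hence $c=q$. In either case \refprop{BraidMove} simultaneously exhibits $K$ as the closure of an \emph{explicit} positive braid $\beta_K$ on $c$ strands. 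Since a positive braid closure is fibered with fiber the Bennequin surface, $2g(K)$ equals the number of crossings of $\beta_K$, minus $c$, plus one; equating this with $(c-1)(d-1)$ determines $d$ uniquely from the parameters and already forces the divisibility $(c-1)\mid 2g(K)$.

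Reducing $2g(K)$ modulo $c-1$ and using $\gcd(p,q)=1$ eliminates a large range of parameters outright, and this is exactly where the arithmetic hypotheses enter: for $q>a_n$ one has $c=q$, and $2g(K)\bmod(q-1)$ involves the term $p-1$, which is divisible by $q-1$ precisely when $p\equiv 1\pmod{q-1}$, the borderline instance being $p=bq+1$; when $p$ is not of this form the divisibility fails and $K$ cannot be a torus knot. The surviving cases leave a single candidate $T(c,d)$, which must be ruled out by hand. The natural tool is the Alexander polynomial: $\Delta_K$ is monic of degree $2g(K)$ and computable from $\beta_K$, while every coefficient of $\Delta_{T(c,d)}$ lies in $\{-1,0,1\}$, so it suffices to produce one coefficient of $\Delta_K$ of absolute value at least $2$ — in the spirit of Morton's twisted-torus-knot computations, this settles the generic remaining parameters. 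For the most delicate families, where $p=bq+1$ with $s_1$ small so that even the Alexander polynomials can agree, one peels off the outer torus braids to reduce to the $n=1$ case, i.e.\ to Lee's classification of twisted torus knots in~\cite{Lee:Cable,Lee:Unknotted}, where the remaining dichotomy — captured by the condition $a_2\ne a_1+1$ in the last bullet — is resolved at the level of the braid.

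The main obstacle is precisely this final step. Braid index and genus, and often the Alexander polynomial as well, are blind to the resonant cases $p=bq+1$, and there one is genuinely close to the known families of twisted torus knots that \emph{are} torus knots, so a correct argument must use the full strength of the hypotheses; without them the conclusion is false. Getting those boundary cases right — via either a careful polynomial computation or a clean reduction to the $n=1$ classification — is where the real work of the theorem lies.
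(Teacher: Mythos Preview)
The paper does not prove this theorem. It is quoted verbatim from de~Paiva~\cite{de2022torus} and closed with a terminal \qed, so within this paper there is no argument to compare your sketch against.

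On the sketch itself: the braid-index step is fine, but the genus step does not do what you claim. Counting crossings in the defining positive braid on $p$ strands (or equivalently in the $c$-strand braid coming from \refprop{BraidMove}) gives
\[
2g(K)=(q-1)(p-1)+\sum_{i=1}^{n}(a_i-1)a_i s_i .
\]
In the case $q>a_n$, where $c=q$, the residue of $2g(K)$ modulo $q-1$ is $\sum_i (a_i-1)a_i s_i$ and does not involve $p$ at all; it therefore cannot detect whether $p=bq+1$. The arithmetic hypothesis in the theorem is a condition modulo $q$, not $q-1$, and it simply does not fall out of the genus/braid-index pair in the way you describe. Your proposed endgame is also not supported: the appeal to Lee's $n=1$ classification presupposes a mechanism for ``peeling off'' the outer full-twist blocks to reduce a general $T((a_1,a_1s_1),\dots,(a_n,a_ns_n),(p,q))$ to a single twisted torus knot, and neither this paper nor the cited references supply one; the condition $a_2\neq a_1+1$ in the last bullet is not something Lee's results speak to. So the sketch has genuine gaps beyond missing details, precisely at the places where the listed hypotheses are meant to bite.
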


\begin{theorem}\label{Thm:2Twists_an_p_RelPrime}
Let $a_1, \dots, a_n$, $s_1, \dots, s_n$, and $p, q, k$ be integers satisfying the following hypotheses:
\begin{itemize}
\item $p$ and $q$ are relatively prime,
\item $1<a_1<\dots < a_n$ and $1 < q < a_n < p$,
\item each $s_i>0$ and $s_n\geq 2$,
\item $p$ and $a_n$ are relatively prime,
\item $k\geq 2$, $n\geq 2$.
\end{itemize}
In addition, suppose one of the following holds:
\begin{itemize}
\item $q \neq 1$,  
\item or $s_1 > 1$,
\item or $a_2 \neq a_1 +1$.
\end{itemize}
Then the T-link
$K = T((a_1, a_1s_1), \dots, (a_n, a_ns_n), (p, q + kp))$
is hyperbolic.
\end{theorem}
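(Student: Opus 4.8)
The plan is to combine the atoroidality statement of \refprop{2Twists_an_p_RelPrime} with Thurston's hyperbolisation theorem for Haken manifolds, the same strategy used to prove \refthm{HyperbolicAugLink}. Since $K$ is the closure of a positive braid, its exterior is irreducible and boundary irreducible (this is standard; it also follows from the argument of de~Paiva and Purcell~\cite[Lemma~5.1]{depaiva-purcell2021satellites}). By \refprop{2Twists_an_p_RelPrime}, under the stated hypotheses the exterior of $K$ admits no essential torus. So to invoke Thurston's theorem it remains to show two things: that $K$ is not a torus knot, and that $S^3-N(K)$ contains no essential annulus (equivalently, $K$ is not a non-trivial cable or connected sum or composite, but since $K$ is a knot with atoroidal exterior, anannularity is the only remaining Seifert-fibered obstruction).

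First I would verify $K$ is not a torus knot. The braid $(\sigma_1\cdots\sigma_{p-1})^{q+kp}$ contains the full twist $(\sigma_1\cdots\sigma_{p-1})^p$ at least twice since $k\geq 2$, and by \refcor{TLinkBraidIndex} (applicable because $q < a_n < a_n s_n + q$) the braid index of $K'=T((a_1,a_1s_1),\dots,(a_n,a_ns_n),(p,q))$ is $a_n$; one can check that adding full twists on $p$ strands does not change the braid index argument, so $\beta(K)=a_n$. Alternatively, and more directly, I would apply \refthm{NonTorusKnots} to $K$ itself. Writing $q' = q+kp$, since $q' > p > a_n$ we are automatically in the first bulleted case ($q' > a_n$ — wait, that case requires $a_n < q'$, which holds) of \refthm{NonTorusKnots} \emph{provided} $\gcd(p, q') = \gcd(p,q) = 1$, which is a hypothesis. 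Hmm, but \refthm{NonTorusKnots} is stated for $1 < q < p$, i.e. the last torus-braid parameter smaller than $p$; here the last parameter $q+kp$ exceeds $p$. So I would instead apply \refthm{NonTorusKnots} to $K'$ and then argue that $K$ and $K'$ differ by Dehn surgery on the braid axis, or — cleaner — reorder the T-link description so that $(p, q+kp)$ becomes the relevant largest torus braid: by \refprop{BraidMove}-type manipulations one can exchange the roles of $p$ and $q+kp$. The simplest route: apply \refthm{NonTorusKnots} with the roles arranged so the final parameters are $(q+kp, p)$, or absorb the full twists and note that a knot admitting an essential-torus-free exterior that is also not Seifert fibered must be hyperbolic once we rule out the torus case. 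The three extra alternatives ($q\neq 1$, or $s_1>1$, or $a_2\neq a_1+1$) are exactly the hypotheses needed to land in one of the last three bullets of \refthm{NonTorusKnots} in the borderline subcase, so matching them up is the bookkeeping step.

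Next I would handle essential annuli. Because $S^3-N(K)$ is atoroidal and $K$ is not a torus knot, any essential annulus would have to exhibit $K$ as a non-trivial satellite with companion a torus knot (a cable space has an essential annulus) — but that contradicts atoroidality, since a cabling annulus is accompanied by the cabling torus. More carefully: an essential annulus with both boundary components on $\partial N(K)$ either is a cabling annulus (giving an essential swallow-follow torus, contradicting \refprop{2Twists_an_p_RelPrime}) or has boundary a longitude, forcing $K$ to be a composite knot or the exterior to be a twisted $I$-bundle, again producing an essential torus or an essential sphere, both excluded. Since $K$ is a knot, there is only one boundary component, so there are no annuli with boundaries on distinct components to worry about. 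Thus $S^3-N(K)$ is anannular.

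Having assembled irreducibility, boundary irreducibility, atoroidality, anannularity, and the fact that $K$ is not a torus knot (hence the exterior is not Seifert fibered), Thurston's hyperbolisation theorem for Haken manifolds~\cite{thurston:bulletin} gives that $S^3-N(K)$ is hyperbolic. The main obstacle I anticipate is purely technical: applying \refthm{NonTorusKnots}, which is phrased for a T-link whose largest torus-braid exponent is smaller than the number of strands, to our link $K$ whose largest exponent $q+kp$ exceeds $p$. Resolving this cleanly — either by a preliminary isotopy swapping $p$ and $q+kp$ via \reflem{InvertTpq}, or by passing to $K'$ and controlling the Dehn surgery on the axis — is the step that requires the most care, and it is where the three auxiliary alternatives ($q\neq 1$, $s_1>1$, $a_2\neq a_1+1$) get consumed.
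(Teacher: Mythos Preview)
Your approach is correct and matches the paper's strategy: atoroidal via \refprop{2Twists_an_p_RelPrime}, then rule out torus knots via \refthm{NonTorusKnots}. Two points where the paper is cleaner. First, since $\gcd(p,q)=1$ makes $K$ a knot, the paper invokes Thurston's trichotomy for knots in $S^3$ (torus, satellite, or hyperbolic) directly; once $K$ is atoroidal (hence not satellite) and not a torus knot, hyperbolicity follows, so your separate discussion of irreducibility, boundary irreducibility, and anannularity is unnecessary. Second, the technical obstacle you anticipated is resolved exactly as you guessed, via the $T(p,q)\leftrightarrow T(q,p)$ symmetry: the paper cites Birman--Kofman~\cite[Corollary~3]{BirmanKofman} to rewrite $K$ as $T((a_1,a_1s_1),\dots,(a_n,a_ns_n),(q+kp,p))$, after which \refthm{NonTorusKnots} applies with $q+kp$ in the role of ``$p$'' and $p$ in the role of ``$q$''. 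Since $p>a_n$ we are in the last three bullets, and $q+kp$ has the form $bp+1$ if and only if $q=1$, which is precisely where the three auxiliary alternatives $q\neq 1$, $s_1>1$, $a_2\neq a_1+1$ are consumed.
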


\begin{proof}
Because $\gcd(p,q)=1$, $K$ is a knot. By \refprop{2Twists_an_p_RelPrime}, $K$ is atoroidal, so not a satellite knot. 

The T-knot $K$ is equivalent to the T-knot
\[T((a_1, a_1s_1), \dots, (a_n, a_ns_n), (q + kp, p)) \]
by~\cite[Corollary~3]{BirmanKofman}.  The integer $q + kp$ does not have the form $bp + 1$ if and only if $q$ is different from 1. So under these conditions, $K$ is not a torus knot by \refthm{NonTorusKnots}.

Therefore, by Thurston's hyperbolization Theorem for knots~\cite{thurston:bulletin}, $K$ is hyperbolic.
\end{proof}

\begin{proposition}\label{Prop:SomeParamsAtoroidal}
Let $a_1, \dots, a_n$, $s_1, \dots, s_n$, and $p, q, k$ be integers satisfying the following hypotheses:
\begin{itemize}
\item $p$ and $q$ are relatively prime,
\item $1<a_1<\dots < a_n$, and $1 < q < a_n < p$,
\item each $s_i>0$ and both $s_n$ and $s_{n-1}$ are at least $2$.
\end{itemize}
Suppose also that one of the following holds:
\begin{itemize}
\item $q < a_{n-1}$ and $a_n$ and $a_{n-1}$ are relatively prime, or
\item $q > a_{n-1}$ and $a_n$ and $q$ are relatively prime.
\end{itemize}
Then the knot
$ K = T((a_1, a_1s_1), \dots, (a_n, a_ns_n), (p, q)) $
is atoroidal.
\end{proposition}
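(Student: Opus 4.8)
The plan is to follow the same strategy as in the proof of \refprop{2Twists_an_p_RelPrime}, but without a braid axis to Dehn-surger along; instead, the inner torus braids $(a_{n-1}, a_{n-1}s_{n-1})$ and $(a_n, a_ns_n)$ supply enough full twisting to control an essential torus directly. First I would suppose $T$ is an essential torus in the exterior of $K$, bounding a solid torus $V$ with core $L$. Since $s_n \geq 2$, the braid representing $K$ on $p$ strands contains at least two positive full twists on $a_n$ strands after applying \refprop{BraidMove}: by \refcor{TLinkBraidIndex}, $K$ has braid index $a_n$, and $K$ is the closure of a positive braid on $a_n$ strands of the form $(\sigma_{a_n-1}\dots\sigma_{a_n-q+1})^{p-a_n} B_{a_n} (\sigma_1\dots\sigma_{a_n-1})^{a_ns_n}$ containing a full twist (indeed several). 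The key input will be Ito's theorem~\cite[Theorem~1.2(3)]{Ito}, exactly as used in \refprop{2Twists_an_p_RelPrime}: because $K$ is the closure of a braid on $a_n$ strands with at least two positive full twists, any essential torus $T$ is disjoint from the braid axis on $a_n$ strands, the knot $L$ inside $T$ is itself a closed braid, and $K$ is a generalized $d$-cabling of $L$ for some $d>1$; consequently $d \mid a_n$.

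Next I would rule out $d = a_n$, i.e.\ the case $L$ is knotted with braid index $1$ (so $V$ unknotted and $K$ a cable of an unknot) or $L$ a nontrivial knot with $\beta(L)=1$. If $L'$ — I'll write $L$ — is trivial, \reflem{WilliamsTrivialCase} forces $a_n = d$, so $T$ is the boundary of a neighbourhood of a $(1,m)$-curve and $K$ sits inside $V$ as a braid on $a_n$ strands; this means $T$ is isotopic to the braid axis, contradicting that $T$ is essential (the braid axis complement of a braid containing a full twist is a solid torus union the exterior, which is not essential here — more carefully, a generalized $a_n$-cabling of the unknot given by a positive braid has braid index $a_n$ by \reflem{WilliamsTrivialCase}, and $T$ essential would make $K$ decompose, but the outermost such torus is the braid axis and is not essential). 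So $L$ is a nontrivial knot, and by \refthm{Williams}, $a_n = d\,\beta(L)$ with $d>1$, hence $\beta(L) = a_n/d < a_n$.

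Now comes the step where the two cases in the hypothesis enter, and which I expect to be the main obstacle. I need to identify $L$ concretely. The point is that since $T$ is disjoint from the braid axis on $a_n$ strands and $K$ is a $d$-cable of $L$ with $L$ a closed braid inside $V$, the braid word for $K$ on $a_n$ strands must be, up to conjugation, the $d$-cable of the braid word for $L$. Reading off the structure $(\sigma_{a_n-1}\dots\sigma_{a_n-q+1})^{p-a_n} B_{a_n}(\sigma_1\dots\sigma_{a_n-1})^{a_ns_n}$, and using that the full-twist blocks $(\sigma_1\dots\sigma_{a_n-1})^{a_n s_n}$ restrict to full-twist blocks on the $\beta(L)$ strands of $L$, one expects $L$ to be, again by \refprop{BraidMove} applied in reverse, a torus knot or a smaller T-knot: in the case $q < a_{n-1}$ the relevant sub-braid on $a_{n-1}$ strands dominates and one is comparing $a_n$ against the braid index of a knot built from $(a_{n-1}, a_{n-1}s_{n-1})$ and a $(\,\cdot\,, a_{n-1})$ torus braid, forcing (via \refcor{TLinkBraidIndex} again, since $s_{n-1}\geq 2$) that $\beta(L)$ is a multiple of $a_{n-1}$, so $a_{n-1}\mid a_n$, contradicting $\gcd(a_n,a_{n-1})=1$; in the case $q > a_{n-1}$ the $(p,q)$-part dominates and the same reasoning gives $q \mid \beta(L) \mid a_n$, contradicting $\gcd(a_n,q)=1$. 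Making the identification of $L$ rigorous — that the cabling braid really does refine to the expected smaller T-knot so that \refcor{TLinkBraidIndex} applies to it — is the delicate part; I would handle it by noting that a generalized cabling carries the $a_n s_n$ full twists of $K$ to $(a_n s_n)/d$ full twists on $\beta(L)$ strands of $L$, so $\beta(L)$ divides a quantity through which $a_{n-1}$ (resp.\ $q$) must also divide, after tracking the braid-index computation \refcor{TLinkBraidIndex} through the cable. In every case we reach a contradiction with the coprimality hypothesis, so $K$ admits no essential torus.
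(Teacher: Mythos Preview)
Your setup is correct and matches the paper: rewrite $K$ via \refprop{BraidMove} as the closure of a positive braid on $a_n$ strands with at least two full twists, then invoke Ito to conclude that any essential torus $T$ misses the braid axis and $K$ is a generalised $d$-cabling of the core $L$ with $d\mid a_n$, $d>1$.

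The gap is what comes next. You write that you proceed ``without a braid axis to Dehn-surger along,'' but in fact the paper does exactly the Dehn surgery you think is unavailable. After passing to the $a_n$-strand braid $B$, the paper performs $(-1/s_n)$-surgery on the braid axis $C$ of $B$. This strips out the $s_n$ full twists on $a_n$ strands and turns $K$ into the concrete T-link
\[
K'=T((a_1,a_1s_1),\dots,(a_{n-1},a_{n-1}s_{n-1}),(a_n,q)),
\]
while $T$ survives as a torus $T'$ in the exterior of $K'$, so $K'$ is still a generalised $d$-cabling of some $L'$. Now the braid index of $K'$ is something you can compute directly: by \refcor{TLinkBraidIndex} it is $a_{n-1}$ when $q<a_{n-1}$ (this is where $s_{n-1}\ge 2$ is used), and by \refthm{FranksWilliams} it is $q$ when $q>a_{n-1}$. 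Williams' theorems then give $d\mid a_{n-1}$ (resp.\ $d\mid q$), and together with $d\mid a_n$ this contradicts the coprimality hypothesis.

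Your alternative, trying to read off the isotopy type of $L$ from the braid word for $K$ and then apply \refcor{TLinkBraidIndex} to $L$, is where the argument breaks down, and you flag this yourself as ``the delicate part.'' Ito's theorem only tells you that $L$ is a closed braid on $a_n/d$ strands inside the solid torus; it does not hand you a braid word for $L$, and there is no reason a priori that $L$ inherits the $(a_{n-1},a_{n-1}s_{n-1})$ block or that $a_{n-1}$ divides $\beta(L)$. The Dehn surgery trick sidesteps this entirely: you never identify $L$ or $L'$, you only compute $\beta(K')$, and Williams does the rest. (Your separate paragraph about ruling out $d=a_n$ is also unnecessary once the surgery argument is in place; the contradiction $d\mid a_n$, $d\mid a_{n-1}$, $d>1$, $\gcd(a_n,a_{n-1})=1$ covers all cases at once.)
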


\begin{proof}
Suppose the exterior of $K$ in $S^3$ admits an essential torus $T$. 

By \refprop{BraidMove}, $K$ is equivalent to the knot given by the closure of the braid
\[ B = (\sigma_{a_n-1}\dots \sigma_{a_n-q+1})^{p-a_n} \cdot \tau \cdot
(\sigma_1\dots \sigma_{a_n-1})^{s_na_n+q}, 
\]
where $\tau$ is the concatination of braids $(a_1,a_1 s_1)\dots (a_{n-1},a_{n-1}s_{n-1})$. 

Since $B$ has at least two positive full twists on $a_n$ strands, it follows from \cite[Theorem~1.2(3)]{Ito} that $T$ does not intersect the braid axis $C$ of $B$. Thus there is an integer $d>0$ such that $K$ is a generalized $d$-cabling of the core $L$ of the solid torus bounded by $T$. Hence $d$ divides $a_n$.

Perform $(-1/s_{n})$-Dehn surgery along the braid axis $C$ to obtain the braid
\[
B' = (\sigma_{a_n-1}\dots \sigma_{a_n-q+1})^{p-a_n}\cdot \tau \cdot
(\sigma_1\dots \sigma_{a_n-1})^{q}. \]
Its closure gives $K'=T((a_1,a_1s_1),\dots,(a_{n-1},a_{n-1}s_{n-1}), (a_n,q))$.
The torus $T$ becomes a new essential torus $T'$ in the exterior of $K'$. 
The torus $T'$ bounds a solid torus with core $L'$, which is either trivial or knotted.

Suppose first the case that $q < a_{n-1}$. Then $q \leq a_{n-1} \leq a_{n-1}s_{n-1}+q$, so \refcor{TLinkBraidIndex} implies that $K'$ has braid index equal to $a_{n-1}$.
If $L'$ is the trivial knot, then $a_{n-1}$ is equal to $d$ by \reflem{WilliamsTrivialCase}. This implies that $d$ divides both $a_n$ and $a_{n-1}$, contradicting the assumption in this case that these are relatively prime.
Similarly, if $L'$ is knotted, then \refthm{Williams} implies that $a_{n-1}$ is a multiple of $d$, with the same contradiction.

Now suppose $q > a_{n-1}$. Then $K'$ has braid index $q$ by Franks and Williams, \refthm{FranksWilliams}. 
If $L'$ is trivial, then as above, \reflem{WilliamsTrivialCase} implies $q$ equals $d$, and therefore $d$ divides both $a_n$ and $q$, contradicting the hypothesis. Similarly, if $L'$ is knotted, \refthm{Williams} implies $q$ is a multiple of $d$, and again $d$ divides both $a_n$ and $q$, which is a contradiction. 
\end{proof}

\begin{theorem}\label{Thm:SomeParamsHyperbolic}
Let $a_1, \dots, a_n$, $s_1, \dots, s_n$, and $p, q, k$ be integers satisfying the following hypotheses:
\begin{itemize}
\item $p$ and $q$ are relatively prime,
\item $1<a_1<\dots <a_n$ and $1 < q < a_n < p$,
\item each $s_i>0$ and both $s_n$ and $s_{n-1}$ are at least $2$.
\end{itemize}
Suppose also that one of the following holds:
\begin{itemize}
\item $q < a_{n-1}$ and $a_n$ and $a_{n-1}$ are relatively prime, or
\item $q > a_{n-1}$ and $a_n$ and $q$ are relatively prime.
\end{itemize}
Then $K = T((a_1, a_1s_1), \dots, (a_n, a_ns_n), (p, q))$
is hyperbolic.
\end{theorem}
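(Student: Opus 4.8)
The plan is to combine the atoroidality result just established in \refprop{SomeParamsAtoroidal} with an argument ruling out the torus-knot and satellite cases, and then invoke Thurston's hyperbolisation theorem for knots, exactly as in the proof of \refthm{2Twists_an_p_RelPrime}. Since $\gcd(p,q)=1$, the link $K$ is a knot. By \refprop{SomeParamsAtoroidal}, $K$ is atoroidal, hence not a satellite knot. It remains to show $K$ is not a torus knot, and for this I would appeal to \refthm{NonTorusKnots} of de~Paiva.

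First I would check the hypotheses of \refthm{NonTorusKnots}: we have $\gcd(p,q)=1$, $1<q<p$, and $1<a_1<\dots<a_n<p$. The hypothesis that $a_i\neq q$ for all $i$ is automatic here because $q<a_n$ does not by itself force $a_i\neq q$ for smaller $i$, so I would first apply \refprop{NoInteriorFullqTwists} (as noted at the start of \refsec{SomeParameters}) to assume without loss of generality that no $a_i$ equals $q$; alternatively, if some $a_i=q$ the twisting region on $q$ strands can be absorbed. Then, since our standing hypothesis includes $q<a_n$, the very first bullet of \refthm{NonTorusKnots} ($q<a_n$) is satisfied, and hence $T((a_1,a_1s_1),\dots,(a_n,a_ns_n),(p,q))$ is not a torus knot. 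Note that unlike \refthm{2Twists_an_p_RelPrime}, here the last torus braid is $(p,q)$ rather than $(p,q+kp)$, so no conversion via \cite[Corollary~3]{BirmanKofman} is needed and the case analysis on $p$ being of the form $bp+1$ never arises — the condition $q<a_n$ alone does the job.

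Having shown $K$ is atoroidal (so not satellite) and not a torus knot, Thurston's trichotomy for knots in $S^3$ \cite{thurston:bulletin} — equivalently, the hyperbolisation theorem for knot complements — implies $S^3-K$ is hyperbolic, completing the proof.

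The main subtlety, rather than a genuine obstacle, is making sure \refthm{NonTorusKnots} applies verbatim: its statement requires the parameters to be listed with $a_i\neq q$, and one must confirm that our full-twist parameters $(a_i,a_is_i)$ match the form $(a_i,s_ia_i)$ used there (they do, with the roles of the $s_i$'s identical) and that the standing hypothesis $q<a_n$ genuinely lands us in the first listed case. Once that bookkeeping is done, the proof is a short three-line assembly, so I would keep it brief and simply cite \refprop{SomeParamsAtoroidal}, \refthm{NonTorusKnots}, and \cite{thurston:bulletin} in sequence.
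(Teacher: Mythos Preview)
Your proposal is correct and follows essentially the same approach as the paper: cite \refprop{SomeParamsAtoroidal} for atoroidality, invoke \refthm{NonTorusKnots} via the case $q<a_n$ to rule out torus knots, and conclude by Thurston's trichotomy. Your extra care about the hypothesis $a_i\neq q$ is well placed; the paper handles it once and for all at the start of \refsec{SomeParameters} via \refprop{NoInteriorFullqTwists}, exactly as you suggest.
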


\begin{proof}
By \refprop{SomeParamsAtoroidal}, the knot $K$ is atoroidal.
By \refthm{NonTorusKnots}, using the fact that $q<a_n$, $K$ is anannular.

Therefore, $K$ is hyperbolic.
\end{proof}

%%%%%%%%%%%%%%%%%%%%%%%%%%%%%%%%%%%%%%%%%%%%%%%%%%%%%%%%%%%%%%%%%
\section{Satellite T-links obtained by Half-twists}\label{Sec:Satellite}

In this section we switch from discussions of hyperbolic links to satellite links. We find families of Lorenz links that are satellites using half-twists, rather than full-twists. Previous work by de~Paiva and Purcell found conditions that ensure a T-link is satellite, namely~\cite[Theorem~4.3]{depaiva-purcell2021satellites}. Lee has similar results for the case of twisted torus knots~\cite[Theorem~1]{Lee:Cable}. We extend these results. 

\begin{definition}\label{Def:AugmentedHalfTwistD}
Suppose $B$ is a diagram given as a closed braid; we consider the braid to have strands running vertically on the plane of projection.
A \emph{positive half-twist} on the strands from $a$ to $b$ is the braid
\[ \Delta_{a,b} = (\sigma_a \dots \sigma_b)(\sigma_a\dots \sigma_{b-1})\dots (\sigma_a). \]
This can be thought of as cutting the braid between the $a$-th and $b$-th strands, rotating in the anticlockwise direction by $180^\circ$, and gluing back. In braid theory literature, the positive half-twist on all strands is well known as the Garside fundamental braid.
A \emph{negative half-twist} is defined similarly, only the rotation is in the clockwise direction. See \reffig{HalfTwistBraid}.
\end{definition}

\begin{figure}
  \includegraphics{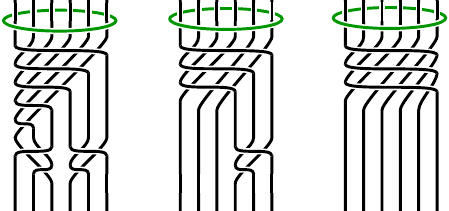}
  \caption{An example of half twists when $r=2, q=3, t=1$. Left: A positive half-twist $\Delta_{1,rq}$, a negative half-twist $\Delta_{1,(r-t)q}$ and a positive half-twist $\Delta_{(r-t)q+1,tq}$. The green circle indicates the braid axis. Middle: The negative half-twist cancels crossings above. Right: The additional positive half-twist gives the braid $(rq,tq)$.}
  \label{Fig:HalfTwistBraid}
\end{figure}

\begin{lemma}\label{Lem:BraidRqSq}
Let $r,q,s$ be positive integers, and suppose $s$ is not a multiple of $r$. The $(rq,sq)$-torus braid is obtained by the following procedure. Start with the trivial braid on $rq$ strands; let $J_{1,rq}$ be an unknot encircling all $rq$ strands. Let $t$ be an integer such that $0<t<r$ and $s=t+kr$ for some integer $k$. Insert a positive half-twist $\Delta_{1,rq}$, followed by a negative half-twist $\Delta_{1,(r-t)q}$ and a positive half-twist $\Delta_{(r-t)q+1,rq}$. 
Finally, perform $1/k$-Dehn filling on $J_{1,rq}$. The result is the $(rq,sq)$-torus braid. 
\end{lemma}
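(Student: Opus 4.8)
The plan is to track the braid word produced by the procedure and recognize it, after the Dehn filling, as the standard $(rq, sq)$-torus braid word. First I would set up the key identity underlying the construction: the full twist $\Delta_{1,m}^2$ on $m$ strands equals $(\sigma_1\cdots\sigma_{m-1})^m$, and more generally a single half-twist $\Delta_{1,m}$ conjugates the braid group on $m$ strands by the ``flip'' permutation $i \mapsto m+1-i$. I would then compute, purely in the braid group on $rq$ strands, the effect of inserting $\Delta_{1,rq}$, then $\Delta_{1,(r-t)q}$ (negative), then $\Delta_{(r-t)q+1,rq}$ (positive). The claim to verify at this stage is that these three half-twists, taken together, produce exactly the $(rq,tq)$-torus braid $(\sigma_1\cdots\sigma_{rq-1})^{tq}$; this is the content illustrated in \reffig{HalfTwistBraid}. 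The cleanest way to see it is the picture already given: the positive half-twist on all $rq$ strands is $\Delta_{1,rq}$; cutting it with the negative half-twist $\Delta_{1,(r-t)q}$ on the left block cancels the crossings contributed by that block, and the extra positive half-twist $\Delta_{(r-t)q+1,rq}$ on the right block of $tq$ strands combines with what remains to give $t$ worth of twisting on all $rq$ strands, i.e.\ the $(rq,tq)$-braid. I would phrase this as a short braid-word computation, checking the exponent sum of each $\sigma_i$ matches on both sides and invoking that a positive braid on $rq$ strands is determined up to the relevant moves; alternatively, I would simply cite the figure as the verification in the style already used elsewhere in the paper.

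Next I would bring in the unknot $J_{1,rq}$ encircling all $rq$ strands and the $1/k$-Dehn filling. Performing $1/k$-surgery on an unknot encircling a collection of parallel strands inserts $k$ full twists $\Delta_{1,rq}^{2k} = (\sigma_1\cdots\sigma_{rq-1})^{rqk}$ into the braid at that location (this is the standard twisting-along-an-unknot move, and is the same mechanism used implicitly via \refthm{FranksWilliams}-type setups and in \refcor{FullTwistConj}). Since the braid at that point is the $(rq,tq)$-torus braid $(\sigma_1\cdots\sigma_{rq-1})^{tq}$, the Dehn filling turns it into
\[
(\sigma_1\cdots\sigma_{rq-1})^{tq}\,(\sigma_1\cdots\sigma_{rq-1})^{rqk} = (\sigma_1\cdots\sigma_{rq-1})^{tq+rqk} = (\sigma_1\cdots\sigma_{rq-1})^{(t+kr)q} = (\sigma_1\cdots\sigma_{rq-1})^{sq},
\]
using $s = t+kr$. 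That is precisely the $(rq,sq)$-torus braid, as required. I would also note that such $t$ and $k$ exist and are essentially unique: writing $s = t + kr$ with $0 < t < r$ is just division with remainder, and the hypothesis that $s$ is not a multiple of $r$ guarantees $t \neq 0$ so that the half-twist $\Delta_{1,(r-t)q}$ is nondegenerate (it acts on $(r-t)q \geq q > 0$ strands) and $\Delta_{(r-t)q+1,rq}$ acts on $tq \geq q > 0$ strands.

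The main obstacle I anticipate is the middle step: rigorously justifying that the three half-twists compose to the $(rq,tq)$-torus braid rather than merely asserting it from the picture. The subtlety is that $\Delta_{a,b}$ as defined is a half-twist on a sub-block of strands, so one must be careful that $\Delta_{1,rq} = \Delta_{1,(r-t)q}\,\Delta_{(r-t)q+1,rq}\,(\text{something})$ is not literally true — the half-twist on all strands does not factor through half-twists on the two sub-blocks — and instead the cancellation is between a \emph{negative} sub-block half-twist and part of the full half-twist. The safe route is to compute exponent sums: in $\Delta_{1,rq}$ the generator $\sigma_i$ appears $\min(i, rq-i)$ times (all positive), and similarly for the two sub-block half-twists; adding the contributions with the correct signs and checking the total agrees with $(\sigma_1\cdots\sigma_{rq-1})^{tq}$, in which $\sigma_i$ appears $tq$ times, reduces the lemma to an elementary (if slightly tedious) arithmetic identity in the three regimes $i \le (r-t)q$, $(r-t)q < i \le rq$ split further around the symmetry point. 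Combined with the fact that positive braids with the same $\sigma$-exponent vector and the same underlying permutation that arise this way are isotopic (or simply with the explicit isotopy indicated in \reffig{HalfTwistBraid}), this closes the argument.
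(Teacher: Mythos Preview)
Your proposal is correct and follows essentially the same two-step route as the paper: first argue that the concatenation $\Delta_{1,rq}\cdot(-\Delta_{1,(r-t)q})\cdot\Delta_{(r-t)q+1,rq}$ equals the $(rq,tq)$-torus braid (the paper does this exactly as you describe, by observing that the negative half-twist cancels the portion of $\Delta_{1,rq}$ on the first $(r-t)q$ strands and then invoking \reffig{HalfTwistBraid}), and second observe that $1/k$-Dehn filling on $J_{1,rq}$ inserts $k$ full twists, turning $(rq,tq)$ into $(rq,(t+kr)q)=(rq,sq)$.

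One caution on your proposed ``safe route'': matching $\sigma$-exponent vectors together with the underlying permutation does \emph{not} determine a braid, even a positive one (for instance $\sigma_1^2\sigma_2^2$ and $\sigma_2^2\sigma_1^2$ in $B_3$ are distinct positive pure braids with identical exponent data), so the exponent-sum computation alone would not close the argument. The paper, like your parenthetical fallback, simply relies on the explicit isotopy shown in \reffig{HalfTwistBraid}; that is the actual justification, and your write-up should lean on it rather than on the exponent heuristic.
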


\begin{proof}
The process is illustrated in \reffig{HalfTwistBraid}. The positive half-twist $\Delta_{1,rq}$ yields a braid
\[(\sigma_1\sigma_2\dots\sigma_{rq-1})(\sigma_1\dots\sigma_{rq-2})\dots(\sigma_1),\]
encircled by $J_{1,rq}$. 
Perform the negative half-twist $\Delta_{1,(r-t)q}$. This concatenates the previous braid with
\[
(\sigma_{(r-t)q-1}^{-1}\dots\sigma_2^{-1}\sigma_1^{-1})(\sigma_{(r-t)q-1}^{-1}\dots\sigma_2^{-1})\dots(\sigma_{(r-t)q-1}^{-1}).\]
This braid cancels with the positive half-twist along the first $(r-t)q$ strands, as shown in \reffig{HalfTwistBraid}, middle. Finally, the positive half-twist $\Delta_{(r-t)q+1,rq}$ concatenates a positive half-twist along the last $tq$ strands, giving the braid
\[(\sigma_1\dots\sigma_{rq-1})^{tq}=(rq,tq),\]
still augmented by the unlink $J_{1,rq}$.

To obtain the braid $(rq,sq)$, perform $1/k$ Dehn filling on $J_{1,rq}$, removing that link component and inserting an additional $krq$ overstrands into the braid, for a total of $tq+krq=sq$ overstrands, giving the desired $(rq,sq)$-torus braid.
\end{proof}

\begin{lemma}\label{Lem:CylinderRqSq}
Let $r,q,s$ be positive integers, with $s$ not a multiple of $r$. Consider the torus braid $(rq,sq)$. At the top of the braid, consider $r$ disjoint discs arranged horizontally, each encircling $q$ strands of the braid, and similar discs at the bottom of the braid, as in \reffig{Cylinder}. The boundary of each disc at the top connects via a cylinder, embedded in the complement of the braid and enclosing $q$ strands, to the boundary of a disc at the bottom of the braid.

Moreover, the solid cylinders enclosed by these cylinders, containing $q$ strands each, forms the $(r,s)$-torus braid.
\end{lemma}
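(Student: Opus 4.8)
The plan is to present the $(rq,sq)$-torus braid in its ``rotation'' form rather than as the braid word $(\sigma_1\cdots\sigma_{rq-1})^{sq}$; in that form the cylinders are essentially visible by inspection. Recall the standard fact that the $(n,m)$-torus braid is isotopic, through an ambient isotopy of $D^2\times I$ carrying the $n$ marked points at the top and bottom to $n$ points $\mathcal P=\{p_1,\dots,p_n\}$ spaced evenly on a round circle $\mathcal C$ centred at the centre of $D^2$, to the braid traced out as $D^2$ is rigidly rotated through total angle $2\pi m/n$, i.e.\ the braid $t\mapsto\rho_t(\mathcal P)$ where $\rho_t$ is rotation by $2\pi mt/n$; see for example~\cite{BirmanKofman}. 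The conclusion of the lemma is invariant under ambient isotopy of the braid, and the isotopy above carries each block of $q$ consecutive marked points on the line to an arc of $q$ consecutive points of $\mathcal P$ on $\mathcal C$, hence carries the horizontal discs of the statement to the discs constructed below. So I would assume the $(rq,sq)$-torus braid is in this rotation form, with $n=rq$, $m=sq$, so that $\rho_t$ is rotation by $2\pi st/r$.

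Next I would cut $\mathcal C$ into $r$ consecutive closed arcs $\alpha_1,\dots,\alpha_r$, each containing exactly $q$ points of $\mathcal P$ in its interior; since $\mathcal P$ is evenly spaced, each $\alpha_j$ subtends angle $2\pi/r$. Then choose pairwise disjoint discs $D_1,\dots,D_r$ in the interior of $D^2$ such that $D_j$ is a thin neighbourhood of $\alpha_j$ meeting $\mathcal C$ exactly in $\alpha_j$ and containing no marked point on its boundary; then $D_j$ encircles the $q$ strands through the points of $\alpha_j$, and the collection $\{D_j\}$, placed at the top and at the bottom, is the family of discs in the statement.

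Now I would track the discs under the rotation: put $C_j=\{(\rho_t(x),t):x\in D_j,\ t\in I\}\subseteq D^2\times I$. Because $\rho_1$ is rotation by $2\pi s/r$, an integer multiple of the arc angle $2\pi/r$, it permutes the arcs $\alpha_j$, so $C_j$ is a solid cylinder with $C_j\cap(D^2\times\{0\})=D_j\times\{0\}$ and $C_j\cap(D^2\times\{1\})=D_{j'}\times\{1\}$ for the appropriate index $j'$. At each fixed time $t$ the discs $\rho_t(D_1),\dots,\rho_t(D_r)$ are a rigid rotation of a disjoint family and so are disjoint; hence the $C_j$ are pairwise disjoint, the only strands of the braid meeting $C_j$ are the $q$ strands through the points of $\alpha_j$ (a strand $t\mapsto(\rho_t(p_i),t)$ lies in $C_j$ iff $p_i\in D_j$), and the side $\{(\rho_t(x),t):x\in\partial D_j,\ t\in I\}$ of $C_j$ is an embedded cylinder disjoint from the braid joining $\partial D_j$ at the top to $\partial D_{j'}$ at the bottom. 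This is the first assertion of the lemma. For the second, let $n_j$ be the midpoint of $\alpha_j$: the points $n_1,\dots,n_r$ are evenly spaced on $\mathcal C$, the core of $C_j$ is the track $t\mapsto(\rho_t(n_j),t)$, and $C_j$ deformation retracts onto this core within $C_j$. Thus the $r$ solid cylinders form the braid obtained by rigidly rotating $r$ evenly spaced points through total angle $2\pi s/r$, which by the same standard fact — now with $n=r$, $m=s$ — is the $(r,s)$-torus braid $(\sigma_1\cdots\sigma_{r-1})^s$.

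The one step requiring genuine care, and the one I would call the main (if soft) obstacle, is the opening reduction: confirming that the isotopy putting the torus braid into rotation form respects the combinatorial block structure, so that the horizontal discs of the statement are carried to the arc discs $D_j$. Once that bookkeeping is made precise, the rest is the elementary fact that a rigid rotation through an integer multiple of $2\pi/r$ preserves a partition of a circle into $r$ equal closed arcs.
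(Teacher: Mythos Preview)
Your argument is correct, and it is genuinely different from the paper's. The paper proves this lemma by invoking the immediately preceding \reflem{BraidRqSq}, which factors the $(rq,sq)$-torus braid as $k$ full twists on $rq$ strands followed by the half-twist combination $\Delta_{1,rq}\cdot(-\Delta_{1,(r-t)q})\cdot\Delta_{(r-t)q+1,rq}$, where $s=t+kr$ with $0<t<r$. It then argues that each of these pieces, being a half-twist on a multiple of $q$ strands, carries the family of $q$-strand tubes to itself (permuting and half-twisting them), and reads off that the tubes themselves undergo $k$ full twists followed by the analogous half-twist combination on $r$ strands, i.e.\ the $(r,s)$-torus braid. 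Your rotation model bypasses this decomposition entirely: once the $(rq,sq)$-braid is presented as a rigid rotation of $rq$ evenly spaced points through angle $2\pi s/r$, the $r$ arc-discs are visibly carried to one another and their centres trace out the $(r,s)$-torus braid directly. This is cleaner and, incidentally, does not use the hypothesis that $s$ is not a multiple of $r$ (the paper needs it only because \reflem{BraidRqSq} is stated that way). The paper's route has the virtue of staying inside the half-twist language that drives the rest of \refsec{Satellite}, and of re-using the lemma just proved; your route buys brevity and a slightly more general statement. The caveat you flag---that the isotopy to rotation form carries the ``leftmost $q$, next $q$, \dots'' discs of the statement to your arc-discs $D_j$---is real but routine, since the standard isotopy sends the linearly ordered strand endpoints to the cyclically ordered points on $\mathcal C$ preserving adjacency.
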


\begin{figure}
  \import{Figures/}{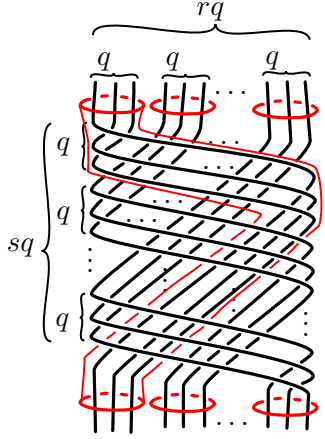}
  \caption{The discs of \reflem{CylinderRqSq} are shown in red, for an example with $q=3$. One of the cylinders of that lemma is also sketched for this example. It follows the first $q$ strands. The other cylinders are similar.}
  \label{Fig:Cylinder}
\end{figure}

\begin{proof}
Let $t$ be an integer such that $0<t<r$ and $s=t+kr$ for some integer $k$. 
By \reflem{BraidRqSq}, the $(rq,sq)$ torus braid is formed from $k$ full twists on $rq$ strands, followed by a positive half-twist $\Delta_{1,rq}$, then a negative half-twist $-\Delta_{1,(r-t)q}$ and a positive half-twist $\Delta_{(r-t)q+1,rq}$. Each half-twist is on a multiple of $q$ strands.

Observe that the cylinders described above can be arranged to completely contain any half-twist on $q$ strands. For a half-twist on a multiple of $q$ strands, say $xq$ strands, $x$ disjoint cylinders enter the top of the half-twist, and then are half-twisted themselves, remaining disjoint, to exit the bottom of the half-twist. Thus the cylinders remain embedded as claimed when passing through half-twists. Finally, each full twist also preserves the cylinders, sending each through a full twist.

To see that the braid formed by the solid cylinders is as claimed, observe that the cylinders form $k$ full twists, followed by one positive half-twist on all strands. The $(r-t)$ left-most cylinders then pass through a negative half-twist, and the remaining $t$ right-most cylinders pass through a positive half-twist. As in \reflem{BraidRqSq}, this creates braid on $r$ strands, with $rk$ overstrands at the top coming from the full twists, followed by $t$ overstrands coming from the concatenation of half-twists. Thus this is an $(r,rk+t) = (r,s)$-torus braid.
\end{proof}

\begin{theorem}\label{Thm:MainHalfTwist}
Let $p,q$ be integers such that $1<q<p$, and let $(a_1,b_1)$, $\dots$, $(a_n, b_n)$ be pairs of integers such that $1<a_1<\dots< a_n\leq q $ and $b_i>0$ for $i=1, \dots, n$. Finally let $r_1, \dots, r_m$ and $s_1, \dots, s_m$ be integers such that $q< r_1q < \dots < r_mq < p$, and $s_i>0$ for $i=1, \dots, m$. Then the T-link
\[
K = T((a_1,b_1), \dots, (a_n, b_n), (r_1q,s_1q), \dots, (r_mq,s_mq),(p,q))
\]
is satellite with companion the T-link $T((r_1,s_1), \dots, (r_m,s_m+1))$. If this companion is a knot, then the pattern is given by the closure of the braid
\[
(a_1, b_1)\dots (a_n,b_n)(\sigma_{q-1}\dots \sigma_{1})^{p-r_m}\left(q,q\left(\sum_{i=1}^m r_is_i\right)+qr_m\right)
\]
\end{theorem}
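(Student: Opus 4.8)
\emph{Overview and normalising the braid.} My plan is to exhibit an essential torus in $S^3-K$ explicitly: first put $K$ in a good braid form with \refprop{BraidMove}, then use \reflem{BraidRqSq} and \reflem{CylinderRqSq} to produce a system of embedded cylinders realising the companion. Because $1<q<r_1q$ we have $r_m\geq 2$ and $q\leq r_mq<p$, so \refprop{BraidMove} applies with $r=r_mq$: placing the tangle $\tau=(a_1,b_1)\cdots(a_n,b_n)(r_1q,s_1q)\cdots(r_mq,s_mq)$ on the $r_mq$ leftmost strands above the $(p,q)$ torus braid, $K$ becomes the closure on $r_mq$ strands of $(\sigma_{r_mq-1}\cdots\sigma_{r_mq-q+1})^{p-r_mq}\cdot\tau\cdot(\sigma_1\cdots\sigma_{r_mq-1})^{q}$. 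Absorbing the trailing $(\sigma_1\cdots\sigma_{r_mq-1})^q$ into the last factor $(r_mq,s_mq)$ of $\tau$ turns it into $(r_mq,(s_m+1)q)$, so $K$ is the closure on $r_mq$ strands of
\[
B=\Lambda\cdot(a_1,b_1)\cdots(a_n,b_n)\cdot(r_1q,s_1q)\cdots(r_{m-1}q,s_{m-1}q)\cdot(r_mq,(s_m+1)q),\qquad \Lambda=(\sigma_{r_mq-1}\cdots\sigma_{r_mq-q+1})^{p-r_mq}.
\]

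\emph{The cylinders.} Apply \reflem{CylinderRqSq} to each torus-braid factor $(r_iq,s_iq)$ and to $(r_mq,(s_m+1)q)$ — interpreted inside the braid group on $r_mq$ strands, and noting that when $s_i$ is a multiple of $r_i$ the factor is a power of the full twist on $r_iq$ strands, which obviously carries any cylinder system through. This produces $r_m$ pairwise disjoint cylinders, each enclosing $q$ consecutive strands, embedded in the complement of the closed braid, whose solid cylinders braid around one another exactly as the concatenation $(r_1,s_1)\cdots(r_{m-1},s_{m-1})(r_m,s_m+1)$ on $r_m$ strands; they match along the $r_m$ parallel $q$-strand sub-tubes at each junction. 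The two remaining kinds of factor are each confined to a single cylinder: every $(a_i,b_i)$ with $a_i\leq q$ lives inside the first cylinder, while $\Lambda$, involving only the last $q$ strands, lives inside the $r_m$-th cylinder (here $r_m\geq 2$ keeps the first and last $q$ strands apart). Hence these factors do not disturb the cylinder system.

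\emph{Companion and pattern.} Let $L_0$ be the closure of $(r_1,s_1)\cdots(r_{m-1},s_{m-1})(r_m,s_m+1)$, i.e.\ the T-link $T((r_1,s_1),\dots,(r_m,s_m+1))$. By the previous step the union of the closed-up solid cylinders is a regular neighbourhood $N(L_0)$ with $K\subset\operatorname{int}N(L_0)$, so $K$ is carried by $L_0$. To read off the pattern, cut $N(L_0)$ along a meridian disc and record $K\cap N(L_0)$ as a braid on $q$ strands: following a component of $L_0$ once, one runs through the cylinders in the order dictated by the permutation of the cylinder braid, accumulating inside the $q$-strand tube the internal braiding produced by each factor of $B$. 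Feeding in the explicit recipe of \reflem{BraidRqSq} — full twists, a positive half-twist on all strands, a cancelling negative half-twist on a left block, then a positive half-twist on the complementary block — and keeping track of the framing correction picked up each time the tube winds around the others, one should obtain the braid $(a_1,b_1)\cdots(a_n,b_n)(\sigma_{q-1}\cdots\sigma_1)^{p-r_m}\bigl(q,\,q(\sum_{i=1}^{m}r_is_i)+qr_m\bigr)$. \textbf{This computation is the main obstacle}: it is where the reverse block $(\sigma_{q-1}\cdots\sigma_1)^{p-r_m}$ and the full-twist block come from, so the internal half-twists, their order, and the sign and magnitude of each framing shift must all be tracked carefully and matched against \reflem{BraidRqSq}.

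\emph{Essentiality.} Finally one checks that a component $T$ of $\partial N(L_0)$ is essential in $S^3-K$. It is incompressible to the inside since the pattern has winding number $q>1$ (so $K\cap N(L_0)$ lies in no ball and is not a core of the solid torus), and incompressible to the outside since the other cylinders wind nontrivially around $T$; it is not boundary-parallel for the same reasons together with the fact that $L_0$ is a nontrivial link (a positive-braid closure on $r_m\geq 2$ strands with crossings, hence of positive genus). Thus $S^3-K$ contains an essential torus, so $K$ is satellite, with companion $L_0$ and the pattern computed above. Everything except the pattern identification is routine given \refprop{BraidMove}, \reflem{BraidRqSq}, \reflem{CylinderRqSq}, and standard incompressibility arguments.
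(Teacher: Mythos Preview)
Your approach is essentially identical to the paper's: reduce to $r_mq$ strands via \refprop{BraidMove}, build the cylinder system via \reflem{CylinderRqSq} on each $(r_iq,s_iq)$ block (you absorb the trailing $(r_mq,q)$ into the last block, the paper instead treats it as contributing an extra $(r_m,1)$ to the companion braid---these are equivalent), observe that the $(a_i,b_i)$ and $\Lambda$ each live inside a single tube, and read off the companion as $T((r_1,s_1),\dots,(r_m,s_m+1))$. Your essentiality argument is, if anything, more detailed than the paper's one-line ``since it forms a nontrivial knot, $\Sigma$ is incompressible.''

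The only real gap is the one you flag yourself: the pattern computation. The paper's argument here is short and does \emph{not} go back through the half-twist decomposition of \reflem{BraidRqSq} inside the tube, which is what you propose. Instead it counts full twists directly. Each time the tube runs through an overstrand of the companion braid the $q$ enclosed strands acquire one full twist on $q$ strands; there are $\sum_{i=1}^{m-1}s_i+(s_m+1)$ such overstrands. On top of this, mapping $N(L_0)$ to a standard unknotted solid torus so that the homological longitude goes to the standard longitude adds a framing correction of $\sum_{i=1}^{m-1}(r_i-1)s_i+(r_m-1)(s_m+1)$ further full twists. Summing gives $\sum_{i=1}^m r_is_i+r_m$ full twists, i.e.\ the block $\bigl(q,\,q\sum r_is_i+qr_m\bigr)$. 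The tangles $(a_i,b_i)$ and the reversed block $\Lambda$ (which lies in the rightmost tube and, read on its $q$ strands, is $(\sigma_{q-1}\cdots\sigma_1)^{p-r_mq}$) are then inserted, and the full twists commuted past them. So the missing ingredient is just this overstrand-plus-longitude bookkeeping, not any further unpacking of \reflem{BraidRqSq}.

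One minor point: your exponent $p-r_mq$ for $\Lambda$ is exactly what \refprop{BraidMove} gives with $r=r_mq$; the paper writes $p-r_m$ in both the statement and the proof, which appears to be a slip there rather than in your argument.
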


\begin{proof}
As before, we think of the T-link as the closure of a braid on $p$ strands arranged vertically, the concatenation of braids $(a_1, b_1)$, $\dots$, $(a_n,b_n)$, $(r_1q,s_1q)$, $\dots$, $(r_mq,s_mq)$, $(p,q)$ in that order.

First apply \refprop{BraidMove} to change the closed braid of the T-link to a closed braid $B'$ on $r_mq$ strands. This isotopy fixes all of the $r_mq$ strands at the top left of the original braid; thus it does not affect any of the braids $(a_j,b_j)$ or $(r_iq, s_iq)$, for any $i$, $j$. In other words, $B'$ is the braid given by concatenating $(\sigma_{r_mq-1}\dots \sigma_{r_mq-q+1})^{p-r_m}$ with braids $(a_1, b_1)$, $\dots$, $(a_n,b_n)$, $(r_1q,s_1q)$, $\dots$, $(r_mq,s_mq)$, and finally the braid $(\sigma_1 \dots \sigma_{r_mq-1})^q$. 

By \reflem{CylinderRqSq}, there are $r_m$ disjoint embedded cylinders in the complement of the portion of the braid starting just above the braid $(a_1,b_1)$, and ending just below the braid $(\sigma_1\dots \sigma_{r_mq-1})^q$ at the bottom. These cylinders each enclose $q$ strands. They extend around the braid closure to give $r_m$ disjoint embedded cylinders running to the top of the braid, each enclosing $q$ strands, arranged right to left across the top of the braid.

The only portion of the braid that is not already enclosed in one of these cylinders is the braid $(\sigma_{r_mq-1}\dots \sigma_{r_mq-q+1})^{p-r_m}$ lying at the top. This is a braid whose left-most strand is the $(r_mq-q+1)$-th strand, and whose right-most strand is the $r_mq$-th strand. In other words, this is a braid on the right-most $q$ strands of the $r_mq$-strand braid. Thus the right-most cylinder, enclosing $q$ strands, can be extended to enclose this braid. Then all cylinders connect to form a closed embedded torus $\Sigma$, encircling $q$ strands of the braid. 

The torus $\Sigma$ bounds a solid torus containing $q$ strands, which we check has the claimed form of the companion in the theorem statement. This solid torus forms a braid on $r_m$ strands. By \reflem{CylinderRqSq}, each $(r_iq,s_iq)$-torus braid from the original T-link causes the solid cylinder to form a braid $(r_i,s_i)$. The braids $(a_j,b_j)$ and $(\sigma_{r_mq-1}\dots \sigma_{r_mq-q+1})^{p-r_m}$ lie completely inside the solid cylinder, so they do not affect the braid it forms. Finally, consider the braid $(\sigma_1\dots\sigma_{r_mq-1})^q$ at the bottom of $B'$. This is formed by $q$ strands running over all the $r_mq$ strands. When the collection of solid cylinders encounter this braid, the left-most solid cylinder encircles exactly these $q$ strands, and runs over all others to lie on the right-most side. Thus it forms a $(r_m, 1)$-torus braid. So the solid torus enclosing $q$ strands has the form of the closure of a braid $(r_1,s_1)\dots(r_m,s_m),(r_m,1)$. This is the T-link $T((r_1,s_1), \dots (r_m,s_m+1))$ as claimed. Since it forms a nontrivial knot in $S^3$, $\Sigma$ is an incompressible torus.

Finally we check the form of the pattern when the companion is a knot. Starting at the top-left of the braid $B'$, the torus $\Sigma$ encloses the braid $(a_1,b_1)\dots(a_n,b_n)$, which will form part of the braid describing the pattern. As $\Sigma$ follows the companion into each of the braids $(r_i,s_i)$, all the $q$ strands will make one full twist each time $\Sigma$ runs completely through an overstrand. There are $s_i$ of these, $i=1, \dots m-1$, plus $s_m+1$ for the $(r_m, s_m+1)$ braid that the companion runs over. These will occur in some order, with $\Sigma$ also enclosing the braid $(\sigma_{q-1}\dots \sigma_{1})^{p-r_m}$, coming from the top right of $B'$, at some point. Because full twists commute in the braid group, we may write the braid as $(a_1,b_1)\dots(a_n,b_n)(\sigma_{q-1}\dots \sigma_{1})^{p-r_m}\cdot \tau$ where $\tau$ is an appropriate number of full twists. To obtain the appropriate number of full twists, we need to consider the homological longitude of the companion. The pattern is the braid obtained when we apply a homeomorphism taking the solid torus bounded by the companion to an unknotted solid torus, with homological longitude mapped to a standard longitude of the unknot. The effect is to add $\sum_{i=1}^{m-1} (r_i-1)s_i + (r_m-1)(s_m+1)$ additional full twists, for a total of $\sum_{i=1}^m r_is_i +r_m$ full twists. Thus the pattern can be written as the braid
\[
(a_1, b_1)\dots (a_n,b_n)(\sigma_{q-1}\dots \sigma_{1})^{p-r_m}(q,q(\sum r_is_i)+qr_m) \qedhere
\]
\end{proof}

%%%%%%%%%%%%%%%%%%%%%%%%%%%%%%%%%%%%%%%%%%%%%%%%%%%%%%%%%%%%%%%%%

%% To make the bibliography:
\bibliographystyle{amsplain}  %% Uses AMS format for bibliography

%% Put all the bib entries in a file references.bib
\bibliography{references}

\end{document}